\documentclass[reqno]{amsart}
\usepackage[T1]{fontenc}
\usepackage[utf8]{inputenc}
\usepackage{color}
\usepackage{url}
\usepackage{enumitem}
\usepackage{amstext}
\usepackage{amsthm}
\usepackage{amssymb}
\usepackage{esint}
\usepackage[pdfusetitle,
 bookmarks=true,bookmarksnumbered=false,bookmarksopen=false,
 breaklinks=false,pdfborder={0 0 0},pdfborderstyle={},backref=false,colorlinks=true]
 {hyperref}
\hypersetup{
 pdfborderstyle={}}

\makeatletter
\numberwithin{equation}{section}
\numberwithin{figure}{section}
\theoremstyle{plain}
\newtheorem{thm}{\protect\theoremname}
\theoremstyle{definition}
\newtheorem{defn}[thm]{\protect\definitionname}
\theoremstyle{plain}
\newtheorem{lem}[thm]{\protect\lemmaname}
\theoremstyle{definition}
\newtheorem{example}[thm]{\protect\examplename}

\usepackage{pdfsync}
\usepackage{graphics}
\usepackage{epstopdf}
\usepackage[all]{xy}
\usepackage{amscd}
\usepackage{mathtools}
\usepackage{stmaryrd}
\usepackage[utf8]{inputenc}
\setlist[enumerate]{leftmargin=*,label=(\roman*),align=left}

\makeatletter
\@namedef{subjclassname@2020}{%
  \textup{2020} Mathematics Subject Classification}
\makeatother

\newcommand{\xyR}[1]{ \makeatletter
\xydef@\xymatrixrowsep@{#1} \makeatother} 
\newcommand{\xyC}[1]{ \makeatletter
\xydef@\xymatrixcolsep@{#1} \makeatother} 
\entrymodifiers={++[ ][F]} 

\newcommand{\ra}{\longrightarrow}

\newcommand{\field}[1]{\mathbb{#1}}
\newcommand{\R}{\field{R}} 
\newcommand{\N}{\field{N}} 

\newcommand{\eps}{\varepsilon} 
\renewcommand{\phi}{\varphi}
\newcommand{\diff}[1]{\ifmmode\mathchoice{\hbox{\rm d}#1}  
 {\hbox{\rm d}#1}  
 {\scalebox{0.75}{$\hbox{\rm d}#1$}}  
 {\scalebox{0.35}{$\hbox{\rm d}#1$}}  
 \fi} 

\newcommand{\abs}[2][\empty]{\ifx#1\empty\left|#2\right|%
\else#1\vert #2 #1\vert\fi}

\newcommand{\cinfty}{\mbox{\ensuremath{\mathcal{C}}}^{\infty}}

\newcommand{\Rtil}{\widetilde \R} 

\newcommand{\frontRise}[2]{\ifmmode\mathchoice{{\vphantom{#1}}^{\scalebox{0.6}{$#2$}}}  
 {{\vphantom{#1}}^{\scalebox{0.56}{$#2$}}}  
 {{\vphantom{#1}}^{\scalebox{0.47}{$#2$}}}  
 {{\vphantom{#1}}^{\scalebox{0.35}{$#2$}}}\fi} 
\newcommand{\RC}[1]{\frontRise{\R}{#1}\Rtil}
\newcommand{\rcrho}{\RC{\rho}}
\newcommand{\rti}{\RC{\rho}}
\newcommand{\gsf}{\frontRise{\mathcal{G}}{\rho}\mathcal{GC}^{\infty}}
\newcommand{\gsfk}[1]{\frontRise{\mathcal{G}}{\rho}\mathcal{GC}^{#1}}

\newcommand{\frontRiseDown}[3]{\ifmmode\mathchoice{{\vphantom{#1}}^{\scalebox{0.6}{$#2$}}_{\scalebox{0.6}{$#3$}}}  
 {{\vphantom{#1}}^{\scalebox{0.56}{$#2$}}_{\scalebox{0.56}{$#3$}}}  
 {{\vphantom{#1}}^{\scalebox{0.47}{$#2$}}_{\scalebox{0.47}{$#3$}}}  
 {{\vphantom{#1}}^{\scalebox{0.35}{$#2$}}_{\scalebox{0.35}{$#3$}}}\fi} 

\newcommand{\ptind}{\displaystyle \mathop {\ldots\ldots\,}} 

\newcommand{\no}[1]{| #1|}
\newcommand{\Gcinf}{\mathcal{G}\cinfty}
\newcommand{\Om}{\Omega}
\newcommand{\st}[1]{{#1^\circ}}
\newcommand{\gs}{\mathcal{G}^s}

\newcommand{\sse}{\subseteq}

\makeatother

\providecommand{\definitionname}{Definition}
\providecommand{\examplename}{Example}
\providecommand{\lemmaname}{Lemma}
\providecommand{\theoremname}{Theorem}

\begin{document}
\title{Classical finite dimensional fixed point methods for generalized functions}
\author{Kevin Islami \and George Apaaboah \and Paolo Giordano}
\thanks{This research was also funded in whole or in part by the Austrian
Science Fund (FWF) 10.55776/PAT9221023, P34113, 10.55776/P33945, 10.55776/P33538.
For open access purposes, the author has applied a CC BY 4.0 public
copyright license to any author-accepted manuscript version arising
from this submission.}
\address{\textsc{Faculty of Mathematics, University of Vienna, Austria}}
\email{kevin.islami@univie.ac.at}
\address{\textsc{Siren} and \textsc{University Grenoble Alpes, France}}
\email{george.apaaboah@gmail.com}
\address{\textsc{Faculty of Mathematics, University of Vienna, Austria}}
\email{paolo.giordano@univie.ac.at}
\subjclass[2000]{46F-XX, 46F30, 26E30, 49M15, 47H10}
\keywords{Nonlinear analysis of generalized functions, Newton-Raphson process,
Banach and Brouwer's fixed point Theorem, non-Archimedean analysis,
Colombeau generalized functions.}
\begin{abstract}
We prove Banach, Newton-Raphson and Brouwer fixed point theorems in
the framework of generalized smooth functions, a minimal extension
of Colombeau's theory (and hence of classical distribution theory)
which makes it possible to model nonlinear singular problems, while
at the same time sharing a number of fundamental properties with ordinary
smooth functions, such as the closure with respect to composition
and several non trivial classical theorems of the calculus. The proved
results allows one to deal with equations of the form $F(x)=0$, where
$F$ is a generalized smooth function, in particular, a Sobolev-Schwartz
distribution. We consider examples with singularities that are not
included in the classical version of these theorems.
\end{abstract}

\maketitle

\section{Introduction: solving nonlinear equations with generalized functions}

\noindent An superficial assumption of Leibniz's viewpoint \emph{natura
non facit saltus}, \cite{Leib}, immediately collapses with our present
mathematical models, which frequently consider various types of discontinuities.
For example, we could be interested in large deformations of solid
bodies or very rapid mechanical fractures (elastoplasticity), \cite{UgFe11,ChCh11,C1},
ruptures in geophysics, \cite{BrHoHo17,HoKuYe19}, micromechanical
models for granular materials, \cite{ChMeNe81,BoBo17}, in the dynamics
of two interpenetrating fluids (multifluid flows), \cite{StWe84,LuTr19},
discontinuities in Lagrangian optics, \cite{LaGhTh11}, wave propagation
in a medium with piecewise smooth characteristics (e.g.~seismic waves
in stratified media, \cite{Ken09}), or solutions of the Schrödinger
equation for an infinite rectangular potential well or even a rectangular
potential well with periodic barriers changing at infinite frequency,
\cite{GaPa90,VKRKPW}. A common mathematical framework used in these
models is that of Sobolev spaces, and hence of weak derivatives, and
Sobolev-Schwartz distributions. However, the informal use of nonlinear
operations, pointwise values for generalized functions and their derivatives,
and also the useful adoption of infinitesimal or infinite quantities
to construct idealized models, are necessary tools in dealing with
singular models that cannot be formalized using the aforementioned
mathematical theories (see e.g.~\cite{C1,BIG} for a general overview
and the previous citations for specific applications). Clearly, opting
\emph{only} for numerical solutions does not allow to have general
theorems studying these solutions and their properties, and hence
a broader and deeper understanding.

J.F.~Colombeau's theory of generalized functions enables all these
modeling possibilities between embedded Sobolev-Schwartz distributions,
avoiding the difficulty of the Schwartz impossibility theorem. See
e.g.\ \cite{C1,Obe92,Pil94,GKOS} for an introduction with applications.
In particular, generalized smooth functions (GSF) are a minimal extension
of Colombeau's theory (and hence of classical distribution theory)
which makes it possible to model nonlinear singular problems, while
at the same time sharing a number of fundamental properties with ordinary
smooth functions, such as the closure with respect to composition
and several non trivial classical theorems of the calculus, see \cite{BIG,GGBL,MTG,LeLuBaGi17,GKV19}.
One could describe GSF as a methodological restoration of Cauchy-Dirac's
original conception of generalized function, see \cite{Lau89,KaTa12}.
In essence, the idea of Cauchy and Dirac (but also of Poisson, Kirchhoff,
Helmholtz, Kelvin and Heaviside) was to view generalized functions
as suitable types of smooth set-theoretical maps, obtained from ordinary
smooth maps depending on suitable infinitesimal or infinite parameters.

In the present paper, we deal with Banach, Newton and Brouwer fixed
point methods in the framework of GSF. This allows us to solve equations
of the form $F(x)=0$, where $F$ is a GSF, in particular, a Sobolev-Schwartz
distribution. Even if we consider only the case where $x$ is a (non-Archimedean,
i.e.~$x$ can also be an infinitesimal or an infinite number) $n$-dimensional
point, this work is preliminary to a subsequent one where we will
deal with infinite-dimensional equations.

The paper is self-contained in the sense that it contains all the
statements required for the proofs we are going to present, all the
needed notions of GSF theory are introduced, and only an elementary
knowledge of Colombeau theory is needed.

In Sec.~\ref{sec:Preliminary-notions}, we provide an introduction
to generalized numbers and their topology, as well as GSF and some
of their analytical properties. Sec.~\ref{sec:Banach-fixed-point}
treats the Banach fixed point theorem in finite dimensional spaces
of generalized points. We continue in Sec.~\ref{sec:Newton's-method}
to treat Newton's method in finite dimensions and the corresponding
quadratic convergence. Then, we conclude this work with some examples
in Sec.~\ref{sec:Examples}, in particular we consider examples with
singularities that are not included in the classical version of these
methods.

\section{\label{sec:Preliminary-notions}Preliminary notions}

\subsection{\label{subsec:The-new-ring}The ring of scalars}

In this work, $I$ denotes the interval $(0,1]\subseteq\R$ and we
will always use the variable $\eps$ for elements of $I$. $\eps$-dependent
nets $x\in\R^{I}$ are denoted by $(x_{\eps})$ and $\N$ denotes
the set of natural numbers, including zero.

\noindent We start by defining the new simple non-Archimedean ring
of scalars $\rti$ that generalizes the Colombeau ring of generalized
numbers $\Rtil$ when $\rho=(\rho_{\eps})=(\eps)$. For all the proofs
of results in this section, see \cite{GKV19}.
\begin{defn}
\label{def:RCGN}Let $\rho=(\rho_{\eps})\in(0,1]^{I}$ be a net such
that $\lim_{\eps\to0^{+}}\rho_{\eps}=0$, then
\begin{enumerate}
\item $\mathcal{I}(\rho):=\left\{ (\rho_{\eps}^{-a})\mid a\in\R_{>0}\right\} $
is called the \emph{asymptotic gauge} generated by $\rho$. The net
$\rho$ is called a \emph{gauge}.
\item If $\mathcal{P}(\eps)$ is a property of $\eps\in I$, we use the
notation $\forall^{0}\eps:\,\mathcal{P}(\eps)$ to denote $\exists\eps_{0}\in I\,\forall\eps\in(0,\eps_{0}]:\,\mathcal{P}(\eps)$.
We read $\forall^{0}\eps$ as: \emph{``for $\eps$ small}''.
\item We say that a net $(x_{\eps})\in\R^{I}$ \emph{is $\rho$-moderate},
and we write $(x_{\eps})\in\R_{\rho}$ if 
\[
\exists(J_{\eps})\in\mathcal{I}(\rho):\ x_{\eps}=O(J_{\eps})\text{ as }\eps\to0^{+}
\]
i.e., if $\exists N\in\mathbb{N}\,\forall^{0}\eps:\ \lvert x_{\eps}\rvert\leq\rho_{\eps}^{-N}$.
\end{enumerate}
Let $(x_{\eps})$, $(y_{\eps})\in\R^{I}$, then we say that $(x_{\eps})\sim_{\rho}(y_{\eps})$
if 
\[
\forall(J_{\eps})\in\mathcal{I}(\rho):\ x_{\eps}=y_{\eps}+O(J_{\eps}^{-1})\text{ as }\eps\to0^{+},
\]
that is if $\forall n\in\N\,\forall\eps:\ |x_{\eps}-y_{\eps}|\le\rho_{\eps}^{n}$.
This is a congruence relation on the ring $\R_{\rho}$ of moderate
nets with respect to pointwise operations, and we can hence define
\[
\RC{\rho}:=\R_{\rho}/\sim_{\rho},
\]
which we call \emph{Robinson-Colombeau ring of generalized numbers},
\cite{Rob73,C1}. We denote the equivalence class $x\in\rti$ by $x=:[x_{\eps}]:=[(x_{\eps})]_{\sim_{\rho}}\in\rti$.
\end{defn}

In the following, $\rho$ will always denote an infinitesimal net
as in Def.~\ref{def:RCGN}, and it can be chosen, e.g., depending
on the class of (differential) equations that needs to be solved for
the generalized functions we are going to introduce, see \cite{GiLu16}.
For motivations concerning the naturality of $\rti$, see \cite{GKV19,KeGi24a}.
The case $\rho_{\eps}=\eps$ corresponds to the ring $\Rtil$ of Colombeau
generalized numbers \cite{GKOS}; for basic properties of $\rti$,
see e.g.~\cite{GKV19}.

\noindent We can define an order relation on $\RC{\rho}$ by saying
that $[x_{\eps}]\le[y_{\eps}]$, if there exists $(z_{\eps})\in\R^{I}$
such that $(z_{\eps})\sim_{\rho}0$ (we say that $(z_{\eps})$ is
\emph{$\rho$-negligible}) and $x_{\eps}\le y_{\eps}+z_{\eps}$ for
$\eps$ small. We say that $x<y$ if $x\le y$ and $x-y$ is invertible.
A proficient intuitive point of view on these generalized numbers
is to think of $[x_{\eps}]\in\rcrho$ as a \emph{dynamic point in
the time $\eps\to0^{+}$}; classical real numbers are hence embedded
as \emph{static points}. Furthermore, we say that $x=[x_{\eps}]\in\rti$
is \emph{near-standard} if $\exists\lim_{\eps\to0^{+}}x_{\eps}=:\st{x}\in\R$.

\noindent Although the order $\le$ is not total, we still have the
possibility to define the infimum $\min\left([x_{\eps}],[y_{\eps}]\right):=[\min(x_{\eps},y_{\eps})]$,
and analogously the supremum function $\max\left([x_{\eps}],[y_{\eps}]\right):=\left[\max(x_{\eps},y_{\eps})\right]$
of a finite number of points, and the absolute value $|[x_{\eps}]|:=[|x_{\eps}|]\in\RC{\rho}$
as well. In the following, we will also use the customary notation
$\RC{\rho}^{*}$ for the set of invertible generalized numbers. Our
notations for intervals are: $[a,b]:=\{x\in\RC{\rho}\mid a\le x\le b\}$,
$[a,b]_{\R}:=[a,b]\cap\R$, and analogously for segments $[x,y]:=\left\{ x+r\cdot(y-x)\mid r\in[0,1]\right\} \subseteq\RC{\rho}^{n}$
and $[x,y]_{\R^{n}}=[x,y]\cap\R^{n}$. Open intervals are defined
using the relation $<$, i.e.~$(a,b):=\{x\in\rcrho\mid a<x<b\}$.
Finally, we set $\diff{\rho}:=[\rho_{\eps}]\in\RC{\rho}$, which is
a positive invertible infinitesimal, whose reciprocal is $\diff{\rho}^{-1}=[\rho_{\eps}^{-1}]$
and which is necessarily a positive infinite number.

As in every non-Archimedean ring, we have the following
\begin{defn}
\label{def:nonArchNumbs}Let $x\in\RC{\rho}$ be a generalized number,
then
\begin{enumerate}
\item $x$ is \emph{infinitesimal} if $|x|\le r$ for all $r\in\R_{>0}$.
If $x=[x_{\eps}]$, this is equivalent to $\lim_{\eps\to0^{+}}x_{\eps}=0$.
This intuitively clear result is neither possible in nonstandard analysis,
\cite{Cut}, nor in synthetic differential geometry, \cite{Lav}.
We write $x\approx y$ if $x-y$ is infinitesimal, and write $D_{\infty}:=\left\{ h\in\rti\mid h\approx0\right\} $
for the set of all infinitesimals.
\item $x$ is \emph{infinite} if $|x|\ge r$ for all $r\in\R_{>0}$. If
$x=[x_{\eps}]$, this is equivalent to $\lim_{\eps\to0^{+}}\left|x_{\eps}\right|=+\infty$.
\item $x$ is \emph{finite} if $|x|\le r$ for some $r\in\R_{>0}$.
\end{enumerate}
\end{defn}

\subsection{\label{subsec:Topologies}Topologies on $\RC{\rho}^{n}$}

On the $\RC{\rho}$-module $\RC{\rho}^{n}$, we can consider the natural
extension of the Euclidean norm, i.e.~$|[x_{\eps}]|:=[|x_{\eps}|]\in\RC{\rho}$,
where $[x_{\eps}]\in\RC{\rho}^{n}$. Even if this generalized norm
takes values in $\RC{\rho}$, it shares the common properties with
classical norms, see e.g.~\cite{GKV19}:
\begin{enumerate}
\item $\lvert x\vert=\max(x,-x)$
\item $\lvert x\rvert\geq0$
\item $\lvert x\rvert=0\implies x=0$
\item $\lvert y\cdot x\rvert=\lvert y\rvert\cdot\lvert x\rvert$
\item $\lvert x+y\rvert\leq\lvert x\rvert+\lvert y\rvert$
\item $\lvert\lvert x\rvert-\lvert y\rvert\rvert\leq\lvert x-y\rvert$.
\end{enumerate}
It is therefore natural to consider, on $\RC{\rho}^{n},$ topologies
generated by balls defined by this generalized norm and a set of radii.
It is a non-trivial step to understand that a meaningful set of radii
that allows one to get continuity of our generalized functions is
the set $\RC{\rho}_{\ge0}^{*}=\rcrho_{>0}$ of positive and invertible
generalized numbers. In fact, the set of balls $\left\{ B_{r}(c)\mid r\in\rti_{>0},\ c\in\RC{\rho}^{n}\right\} $,
where $B_{r}(c):=\left\{ x\in\rti^{n}\mid|x-c|<r\right\} $, is a
base for a topology on $\RC{\rho}^{n}$, called \emph{sharp topology},
and any open set in the sharp topology will be called \emph{sharply
open set}, see \cite{ArJu,Sca00,Sca98,Ob-Ve,GKOS,GKV15} and references
therein. The following result is useful when dealing with positive
and invertible generalized numbers (cf.~\cite{GKOS}).
\begin{lem}
\label{lem:mayer} Let $x\in\RC{\rho}$. Then the following are equivalent:
\begin{enumerate}
\item \label{enu:positiveInvertible}$x$ is invertible and $x\ge0$, i.e.~$x>0$.
\item \label{enu:strictlyPositive}For each representative $(x_{\eps})\in\R_{\rho}$
of $x$ we have $\forall^{0}\eps:\ x_{\eps}>0$.
\item \label{enu:greater-i_epsTom}For each representative $(x_{\eps})\in\R_{\rho}$
of $x$ we have $\exists m\in\N\,\forall^{0}\eps:\ x_{\eps}>\rho_{\eps}^{m}$.
\item There exists a representative $(x_{\eps})\in\text{\ensuremath{\mathbb{R}_{\rho}}}$
of $x$ such that $\exists m\in\mathbb{N}\,\forall^{0}\eps:\ x_{\eps}>\rho_{\eps}^{m}$.
\end{enumerate}
\end{lem}

\noindent The ring $\rcrho$ is Cauchy complete and Hausdorff in the
sharp topology, \cite{GK15,GKV19}, and from Lem.~\ref{lem:mayer},
it also follows that the sharp topology is generated by all the infinitesimal
balls of the type $B_{\diff\rho^{q}}(c)\subseteq\rcrho^{n}$. Therefore,
a sequence $(x_{n})_{n\in\N}$ of $\rcrho^{n}$ converges to its limit
$l$ in the sharp topology if and only if
\begin{equation}
\forall q\in\rcrho_{>0}\,\exists N\in\mathbb{N}\,\forall n\in\N_{>N}:\ \lvert x_{n}-l\rvert<\diff\rho^{q}.\label{eq:limit}
\end{equation}

\subsection*{Generalized smooth functions and their differential calculus}

Using the ring $\rti$, it is easy to consider a Gaussian with an
infinitesimal standard deviation. If we denote this probability density
by $f(x,\sigma)$, and set $\sigma=[\sigma_{\eps}]\in\RC{\rho}_{>0}$,
where $\sigma\approx0$, we obtain a net of smooth functions $(f(-,\sigma_{\eps}))_{\eps\in I}$.
This is the basic idea behind the following
\begin{defn}
\label{def:netDefMap}Let $X\subseteq\RC{\rho}^{n}$ and $Y\subseteq\RC{\rho}^{d}$
be arbitrary subsets of generalized points. Then, we say that 
\[
f:X\longrightarrow Y\text{ is a \emph{generalized smooth function}}
\]
if $f:X\ra Y$ is a set theoretical map and there exists a net $f_{\eps}\in\cinfty(\R^{n},\R^{d})$
defining $f$ in the sense that $f([x_{\eps}])=[f_{\eps}(x_{\eps})]\in Y$
and $(\partial^{\alpha}f_{\eps}(x_{\eps}))\in\R_{{\scriptscriptstyle \rho}}^{d}$
for all representatives $[x_{\eps}]=x\in X$ of any point in the domain
$X$ and all derivatives $\alpha\in\N^{n}$. The space of generalized
smooth functions (GSF) mapping $X$ into $Y$ is denoted by $\gsf(X,Y)$.
\end{defn}

\noindent Let us note explicitly that this definition states minimal
logical conditions to obtain a set-theoretical map from $X$ into
$Y$ defined by a net of smooth functions, see \cite[Sec.~5]{KeGi24a}.
In particular, the following Thm.~\ref{thm:propGSF} states that
the equality $f([x_{\eps}])=[f_{\eps}(x_{\eps})]$ is well-defined,
i.e.~that we have independence from the representatives for all derivatives
$[x_{\eps}]\in X\mapsto[\partial^{\alpha}f_{\eps}(x_{\eps})]\in\RC{\rho}^{d}$,
$\alpha\in\N^{n}$.
\begin{thm}
\label{thm:propGSF}Let $X\subseteq\RC{\rho}^{n}$ and $Y\subseteq\RC{\rho}^{d}$
be arbitrary subsets of generalized points. Let $f_{\eps}\in\cinfty(\R^{n},\R^{d})$
be a net of smooth functions that defines a generalized smooth map
of the type $X\longrightarrow Y$, then
\begin{enumerate}
\item $\forall\alpha\in\N^{n}\,\forall(x_{\eps}),(x'_{\eps})\in\R_{\rho}^{n}:\ [x_{\eps}]=[x'_{\eps}]\in X\ \Rightarrow\ (\partial^{\alpha}f_{\eps}(x_{\eps}))\sim_{\rho}(\partial^{\alpha}f_{\eps}(x'_{\eps}))$.
\item \label{enu:GSF-cont}Each $f\in\gsf(X,Y)$ is continuous with respect
to the sharp topologies induced on $X$, $Y$.
\item $f:X\longrightarrow Y$ is a GSF if and only if there exists a net
$v_{\eps}\in\cinfty(\Omega_{\eps},\R^{d})$ such that $\left\langle \Omega_{\eps}\right\rangle :=\left\{ x\in\rti^{n}\mid\forall[x_{\eps}]=x\,\forall^{0}\eps:\ x_{\eps}\in\Omega_{\eps}\right\} \supseteq X$,
$(\partial^{\alpha}v_{\eps}(x_{\eps}))\in\R_{{\scriptscriptstyle \rho}}^{d}$
for all representatives $[x_{\eps}]=x\in X$ and all $\alpha\in\N^{n}$,
and $f=[v_{\eps}(-)]|_{X}$.
\end{enumerate}
\end{thm}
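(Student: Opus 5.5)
The key step is (i), and I would prove it by contradiction using the mean value theorem on the nets. The other two items then follow quickly from it.

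\emph{Step (i).} Fix $\alpha\in\N^{n}$ and two representatives $(x_{\eps})$, $(x'_{\eps})$ of the same point $x\in X$. Suppose $(\partial^{\alpha}f_{\eps}(x_{\eps}))\not\sim_{\rho}(\partial^{\alpha}f_{\eps}(x'_{\eps}))$. Then there are $q\in\N$ and a sequence $\eps_{k}\downarrow0$ with
\[
|\partial^{\alpha}f_{\eps_{k}}(x_{\eps_{k}})-\partial^{\alpha}f_{\eps_{k}}(x'_{\eps_{k}})|>\rho_{\eps_{k}}^{q}.
\]
The mean value theorem gives points $\xi_{\eps}$ on the segment $[x_{\eps},x'_{\eps}]_{\R^{n}}$ with
\[
|\partial^{\alpha}f_{\eps}(x_{\eps})-\partial^{\alpha}f_{\eps}(x'_{\eps})|\le|\nabla\partial^{\alpha}f_{\eps}(\xi_{\eps})|\cdot|x_{\eps}-x'_{\eps}|.
\]
The main obstacle is that $\xi_{\eps}$ need not define a point of $X$, so moderateness of $\nabla\partial^{\alpha}f_{\eps}$ at $\xi_{\eps}$ is not given directly. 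I would get around this with mixed representatives. For any subset $L\subseteq\{\eps_{k}\}$, define $y_{\eps}:=x'_{\eps}$ for $\eps\in L$ and $y_{\eps}:=x_{\eps}$ otherwise. Since $(x_{\eps})\sim_{\rho}(x'_{\eps})$, the net $(y_{\eps})$ is again a representative of $x\in X$, so by hypothesis all derivatives $(\partial^{\beta}f_{\eps}(y_{\eps}))$ are moderate.

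\emph{Closing the contradiction.} From moderateness along these mixed nets I would extract a uniform $N$ with $|\partial^{\beta}f_{\eps_{k}}(x_{\eps_{k}})|,|\partial^{\beta}f_{\eps_{k}}(x'_{\eps_{k}})|\le\rho_{\eps_{k}}^{-N}$ for $|\beta|\le|\alpha|+1$; a diagonal argument over a subsequence should let one choice of $L$ work for all $\beta$. The cleanest route, however, is a Taylor expansion of order $m$ around $x_{\eps}$ evaluated at $x'_{\eps}$: every term except the remainder is a moderate quantity times a negligible power of $|x_{\eps}-x'_{\eps}|$. The remainder still involves the intermediate point. To control it, I would use the standard trick of replacing $x'_{\eps}$ by $x_{\eps}+t_{\eps}(x'_{\eps}-x_{\eps})$ for an arbitrary $t_{\eps}\in[0,1]$. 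These nets are again representatives of $x$, hence lie in the domain where all derivatives are moderate. Arguing by contradiction along a sequence $\eps_{k}$ as above, I would choose $t_{\eps_{k}}$ so that $\xi_{\eps_{k}}$ lies on the modified net and obtain moderateness of $\nabla\partial^{\alpha}f_{\eps_{k}}(\xi_{\eps_{k}})$. This contradicts the displayed inequality, because the right-hand side of the mean value estimate is then moderate times negligible, hence below $\rho_{\eps_{k}}^{q}$ for $k$ large. I expect this step to be the delicate part.

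\emph{Steps (ii) and (iii).} For (ii), I would repeat the same mean value estimate with $x'$ ranging over a small ball $B_{\diff\rho^{s}}(x)\cap X$. Suppose continuity fails at $x$: then there are $q$ and points $x'_{k}$ with $|x'_{k}-x|<\diff\rho^{k}$ and $|f(x'_{k})-f(x)|\not<\diff\rho^{q}$. By Lem.~\ref{lem:mayer}, this produces a sequence $\eps_{k}$ along which the moderate gradient bound is violated. Gluing representatives of the $x'_{k}$ on disjoint $\eps$-intervals then yields a single point of $X$ where $\nabla f_{\eps}$ is not moderate, a contradiction. For (iii), the forward direction takes $v_{\eps}:=f_{\eps}$ and $\Omega_{\eps}:=\R^{n}$. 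For the converse, I would multiply $v_{\eps}$ by a smooth cutoff equal to $1$ on a smaller set $\Omega'_{\eps}$, e.g.\ points at distance $>\rho_{\eps}^{\eps^{-1}}$-type margin from the complement, chosen so that $\langle\Omega'_{\eps}\rangle\supseteq X$ still holds. Extending by $0$ gives nets in $\cinfty(\R^{n},\R^{d})$ that agree with $v_{\eps}$ at representatives of points of $X$ for $\eps$ small. Then (i) shows the defined map is $f$.
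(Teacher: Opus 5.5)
Your argument is correct in substance. However, the difficulty you single out in Step (i) does not exist, and seeing this reduces (i) to two lines. Any $\xi_\eps$ on the segment $[x_\eps,x'_\eps]_{\R^n}$ satisfies $|\xi_\eps-x_\eps|\le|x'_\eps-x_\eps|$. Hence $(\xi_\eps)$ is moderate and $(\xi_\eps)\sim_\rho(x_\eps)$, i.e.\ $[\xi_\eps]=x\in X$. The definition of GSF requires moderateness at \emph{every} representative of every point of $X$, so $(\nabla\partial^\alpha f_\eps(\xi_\eps))$ is moderate. The mean value estimate then gives moderate times negligible, hence negligible; when $d>1$, apply it componentwise, or take $\xi_\eps$ to be a maximizer of $\|D\partial^\alpha f_\eps\|$ on the segment. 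Your last paragraph of Step (i) makes exactly this observation with $x_\eps+t_\eps(x'_\eps-x_\eps)$. The contradiction along $\eps_k$, the mixed nets indexed by $L$, the diagonal argument and the Taylor expansion can all be dropped.

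The paper gives no proof and only cites \cite{GKV19}. As far as I recall, the argument there is organized around a uniform bound $\sup_{y\in B_{\rho_\eps^q}(x_\eps)}|\partial^\alpha f_\eps(y)|\le\rho_\eps^{-q}$ for $\eps$ small. This bound is proved once, by contradiction and gluing along a sequence $\eps_k$. Well-definedness and local Lipschitz continuity in the sharp topology (hence (ii)) then follow from it by the mean value theorem. Your route is more direct for (i), since it needs no uniform bound. The uniform bound instead gives one lemma serving all items, plus quantitative Lipschitz information. The price of your route is that the gluing must be redone separately for (ii) and for (iii).

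Two points in those steps need tightening.

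In (ii), you should glue the mean value points $\xi_k$, not the representatives of $x'_k$ themselves. Here $\xi_k$ lies between $x_{\eps_k}$ and the representative of $x'_k$ at $\eps_k$, and you use $x_\eps$ for $\eps\notin\{\eps_k\}$. The $\xi_k$ are where the gradient has size at least $\frac12\rho_{\eps_k}^{q-k}$. Moreover, the glued net lies in $X$ only because $|x'_{k,\eps_k}-x_{\eps_k}|<\rho_{\eps_k}^k$ makes it a representative of $x$ itself. Gluing representatives of distinct points of an arbitrary $X$ need not produce a point of $X$. Alternatively, you can glue the representatives of $x'_k$ at $\eps_k$. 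This yields a representative of $x$ at which $f_\eps$ differs from $f_\eps(x_\eps)$ by at least $\frac12\rho_{\eps_k}^q$, which contradicts (i) directly, with no gradient involved.

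In (iii), take $\Omega'_\eps:=\{y\mid d(y,\R^n\setminus\Omega_\eps)>\rho_\eps^{1/\eps}\}$. The inclusion $X\subseteq\langle\Omega'_\eps\rangle$ needs its own gluing argument: if it failed along $\eps_k$, replacing $x_{\eps_k}$ by a point of $\R^n\setminus\Omega_{\eps_k}$ within $2\rho_{\eps_k}^{1/\eps_k}$ gives a representative of $x$ that is not eventually in $\Omega_\eps$. Also, the cutoff must be $\equiv1$ on a neighbourhood of each $x_\eps$ so that all derivatives agree there. The size of the cutoff's own derivatives is irrelevant.
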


The differential calculus for GSF can be introduced by showing existence
and uniqueness of another GSF serving as an incremental ratio.
\begin{thm}[Fermat-Reyes theorem for GSF]
\label{thm:FR-forGSF} Let $U\subseteq\RC{\rho}^{n}$ be a sharply
open set, let $v=[v_{\eps}]\in\RC{\rho}^{n}$, and let $f\in\gsf(U,\RC{\rho})$
be a generalized smooth map generated by the net of smooth functions
$f_{\eps}\in\cinfty(\Omega_{\eps},\R)$. Then
\begin{enumerate}
\item \label{enu:existenceRatio}There exists a sharp neighborhood $T$
of $U\times\{0\}$ and a generalized smooth map $r\in\gsf(T,\RC{\rho})$,
called the \emph{generalized incremental ratio} of $f$ \emph{along}
$v$, such that 
\[
\forall(x,h)\in T:\ f(x+hv)=f(x)+h\cdot r(x,h).
\]
\item \label{enu:uniquenessRatio}Any two generalized incremental ratios
coincide on a sharp neighborhood of $U\times\{0\}$.
\item \label{enu:defDer}We have $r(x,0)=\left[\frac{\partial f_{\eps}}{\partial v_{\eps}}(x_{\eps})\right]$
for every $x\in U$ and we can thus define $\diff f(x)\cdot v:=\frac{\partial f}{\partial v}(x):=r(x,0)$,
so that $\frac{\partial f}{\partial v}\in\gsf(U,\RC{\rho})$.
\end{enumerate}
\end{thm}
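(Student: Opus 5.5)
The plan is to construct the incremental ratio at the level of representatives, using the classical integral form of the remainder, and then to control the domain by a sharply open neighborhood. Fix a representative $(v_\eps)$ of $v$ and a net $f_\eps\in\cinfty(\Omega_\eps,\R)$ defining $f$, with $\langle\Omega_\eps\rangle\supseteq U$ (Thm.~\ref{thm:propGSF}). For $x\in\Omega_\eps$ and $h\in\R$ with $[x,x+hv_\eps]_{\R^n}\subseteq\Omega_\eps$, set
\[
r_\eps(x,h):=\int_0^1\frac{\partial f_\eps}{\partial v_\eps}(x+sh v_\eps)\,\diff s ,
\]
so that $f_\eps(x+hv_\eps)=f_\eps(x)+h\,r_\eps(x,h)$ and $r_\eps(x,0)=\frac{\partial f_\eps}{\partial v_\eps}(x)$. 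The domain of $r_\eps$ is open in $\R^{n+1}$, and $r_\eps$ is smooth there by differentiation under the integral. By Thm.~\ref{thm:propGSF}(iii), $r:=[r_\eps(-)]|_T$ is then a GSF as soon as we find a sharply open $T\supseteq U\times\{0\}$ with $T\subseteq\langle\Omega'_\eps\rangle$ and moderate derivatives at every point of $T$.

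To build $T$, use that $U$ is sharply open: for each $x\in U$ choose $q_x\in\N$ with $B_{2\diff\rho^{q_x}}(x)\subseteq U$, and set
\[
T:=\bigcup_{x\in U}B_{\diff\rho^{q_x}}(x)\times B_{\diff\rho^{p_x}}(0),
\]
where $p_x$ is chosen so that $|h v|<\diff\rho^{q_x}$ whenever $|h|<\diff\rho^{p_x}$. This is possible because $|v|\le\diff\rho^{-N}$ by moderateness. Then for $(y,h)\in T$ every point $y+shv$, $s\in[0,1]$, lies in $B_{2\diff\rho^{q_x}}(x)\subseteq U$. The main obstacle is to pass from this to the $\eps$-level statement that, for every representative of $(y,h)$, the whole real segment $[y_\eps,y_\eps+h_\eps v_\eps]$ lies in $\Omega_\eps$ for $\eps$ small, and that the derivatives of $f_\eps$ are moderate uniformly along it, i.e.~$\sup_{s}|\partial^\alpha f_\eps(y_\eps+sh_\eps v_\eps)|\le\rho_\eps^{-N}$. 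The argument here is by contradiction. If it failed along a sequence $\eps_k\to0$, pick bad points $s_k$ and define a net $z_\eps$ that equals $y_{\eps_k}+s_kh_{\eps_k}v_{\eps_k}$ at $\eps=\eps_k$ and $y_\eps$ otherwise. Then $[z_\eps]\in B_{2\diff\rho^{q_x}}(x)\subseteq U$, which contradicts either $[z_\eps]\in\langle\Omega_\eps\rangle$ or the moderateness of $(\partial^\alpha f_\eps(z_\eps))$ required in Def.~\ref{def:netDefMap}. Since $T$ is a union of sharp balls and of products of sharp balls, it is sharply open. Derivatives of $r_\eps$ in $(y,h)$ are integrals of $s^j\partial^\beta f_\eps$ times powers of $v_\eps$, so they are moderate by the uniform bound. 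This proves (i).

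For (iii), evaluate at $h=0$ to get $r(x,0)=[\partial_{v_\eps}f_\eps(x_\eps)]$. This is well defined by Thm.~\ref{thm:propGSF}(i), and it is a GSF on $U$ because it is defined by the net $\partial_{v_\eps}f_\eps$, whose derivatives are moderate. For (ii), let $r,\bar r$ be two ratios on $T,\bar T$. On $T\cap\bar T$ we have $h\,(r(x,h)-\bar r(x,h))=0$, so $r=\bar r$ wherever $h$ is invertible. For general $h$, fix $(x,h)$ in a product ball $B\times B_{\diff\rho^{p}}(0)\subseteq T\cap\bar T$. Points with invertible $h'$ are sharply dense near $h$: given a representative $(h_\eps)$, set $h'_\eps=h_\eps$ where $|h_\eps|\ge\rho_\eps^{m}$ and $h'_\eps=\rho_\eps^m$ otherwise, so $|h'-h|\le2\diff\rho^m$. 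Continuity of $r$ and $\bar r$ (Thm.~\ref{thm:propGSF}(ii)) and letting $m\to\infty$ then give $r(x,h)=\bar r(x,h)$ on all of $T\cap\bar T$, which is a sharp neighborhood of $U\times\{0\}$.
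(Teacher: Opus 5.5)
The paper gives no proof of this statement (it is quoted among the preliminaries from \cite{GKV19}), and your argument is correct; it is essentially the standard proof of this result. It uses the integral-form ratio $r_\eps(x,h)=\int_0^1\partial_{v_\eps}f_\eps(x+shv_\eps)\,\diff s$, a neighborhood $T$ built from products of sharp balls, and the patched-net contradiction argument giving $[y_\eps,y_\eps+h_\eps v_\eps]_{\R^n}\subseteq\Omega_\eps$ with uniformly moderate derivatives. Uniqueness then comes from the sharp density of invertible increments together with sharp continuity.

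Two bookkeeping remarks. First, in (ii) the density step needs $(x,h)$ to be a sharp interior point of $T\cap\bar T$. So your conclusion should be stated on that interior, which still contains $U\times\{0\}$, rather than on all of $T\cap\bar T$. Second, the formula in (iii) holds for an arbitrary ratio $r$, not only the one you constructed, because by (ii) any ratio agrees with yours at every $(x,0)$.
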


Note that this result permits considering the partial derivative of
$f$ with respect to an arbitrary generalized vector $v\in\RC{\rho}^{n}$
which can be, e.g., infinitesimal or infinite. Using this result,
we can also define subsequent differentials $\diff{}^{j}f(x)$ as
$j-$multilinear maps, and we set $\diff{}^{j}f(x)\cdot h^{j}:=\diff{}^{j}f(x)(h,\ptind^{j},h)$.
The set of all $j-$multilinear maps $\left(\rti^{n}\right)^{j}\ra\rti^{d}$
over the ring $\rti$ will be denoted by $L^{j}(\rti^{n},\rti^{d})$.
For $A=[A_{\eps}(-)]\in L^{j}(\rti^{n},\rti^{d})$, we set $\no{A}:=[\no{A_{\eps}}]$,
the generalized number defined by the operator norms of the multilinear
maps $A_{\eps}\in L^{j}(\R^{n},\R^{d})$.

In contrast to the case of distributions and Colombeau generalized
functions, there is no problem in considering the free composition
of two GSF. This property opens up new interesting possibilities,
e.g.,~in considering differential equations $y'=f(y,t)$, where $y$
and $f$ are GSF. For instance, there is no problem in studying $y'=\delta(y)$.
See \cite{GKV19} for the proof of the following
\begin{thm}
\label{thm:GSFcategory} Subsets $S\subseteq\RC{\rho}^{s}$ with the
trace of the sharp topology, and generalized smooth maps as arrows
form a subcategory of the category of topological spaces. We will
call this category $\gsf$, the \emph{category of GSF}.
\end{thm}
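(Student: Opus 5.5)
To prove Thm.~\ref{thm:GSFcategory} we have to check three things: identities are GSF, GSF are continuous for the trace of the sharp topology, and the composition of two GSF is again a GSF. Continuity is exactly Thm.~\ref{thm:propGSF}.\ref{enu:GSF-cont}, so it is enough to show that $\gsf$ contains identities and is closed under composition. Associativity and the unit laws then hold automatically, because arrows are set-theoretical maps and composition is ordinary composition of maps.

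\emph{Identities.} For $S\subseteq\RC{\rho}^{s}$, take the constant net $f_{\eps}:=\mathrm{id}_{\R^{s}}$. Then $[f_{\eps}(x_{\eps})]=[x_{\eps}]=x\in S$. All derivatives are either $x_{\eps}$ (moderate), constant, or $0$, so they are moderate, and $\mathrm{id}_{S}\in\gsf(S,S)$.

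\emph{Composition.} Let $f\in\gsf(X,Y)$ be defined by $(f_{\eps})$, with $X\subseteq\RC{\rho}^{n}$ and $Y\subseteq\RC{\rho}^{d}$, and let $g\in\gsf(Y,Z)$ be defined by $(g_{\eps})$, with $Z\subseteq\RC{\rho}^{m}$. We take the net $h_{\eps}:=g_{\eps}\circ f_{\eps}\in\cinfty(\R^{n},\R^{m})$ and check the two conditions of Def.~\ref{def:netDefMap}.

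First, fix any representative $[x_{\eps}]=x\in X$. Then $(f_{\eps}(x_{\eps}))$ is moderate and is a representative of $f(x)\in Y$. By the definition of $g$ applied to this particular representative,
\[
[h_{\eps}(x_{\eps})]=[g_{\eps}(f_{\eps}(x_{\eps}))]=g(f(x))\in Z.
\]

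Second, we need moderateness of $(\partial^{\alpha}h_{\eps}(x_{\eps}))$ for every $\alpha$. We use the multivariate Faà di Bruno formula. It expresses $\partial^{\alpha}(g_{\eps}\circ f_{\eps})(x_{\eps})$ as a finite sum, with coefficients independent of $\eps$, of products of factors of two kinds:
\begin{itemize}
\item terms $\partial^{\beta}g_{\eps}(f_{\eps}(x_{\eps}))$ with $|\beta|\le|\alpha|$;
\item terms $\partial^{\gamma}f_{\eps}^{i}(x_{\eps})$ with $|\gamma|\le|\alpha|$.
\end{itemize}
The second kind is moderate because $f$ is a GSF. The first kind is moderate because $g$ is a GSF and its defining condition holds for \emph{all} representatives of points of $Y$, in particular for $(f_{\eps}(x_{\eps}))$. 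Since $\R_{\rho}$ is a ring, finite sums of finite products of moderate nets are moderate, so $h:=g\circ f\in\gsf(X,Z)$.

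The only delicate point is precisely this use of the ``for all representatives'' clause in Def.~\ref{def:netDefMap}. It is what lets us plug the specific representative $(f_{\eps}(x_{\eps}))$ into $g_{\eps}$ without first knowing that this net is a convenient representative of $f(x)$. Because the definition quantifies over all representatives, this works directly, and Thm.~\ref{thm:propGSF} is not needed for well-definedness of the composite. If one prefers nets defined only on $\Omega_{\eps}$ as in Thm.~\ref{thm:propGSF}(iii), the same argument applies, but one must additionally check that $f_{\eps}(x_{\eps})$ lies in the domain of $g_{\eps}$ for $\eps$ small. That holds because $f(x)\in Y\subseteq\langle\Omega'_{\eps}\rangle$, where $\Omega'_{\eps}$ is the domain of $g_{\eps}$.
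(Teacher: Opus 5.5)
Your proof is correct. It is essentially the argument the paper relies on when it refers to its cited source for this theorem. The composite is defined by the net $g_\eps\circ f_\eps$, and the moderateness of its derivatives follows from Fa\`a di Bruno's formula together with the ``for all representatives'' clause applied to $(f_\eps(x_\eps))$. Continuity of the arrows is Thm.~\ref{thm:propGSF}.\ref{enu:GSF-cont}.

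Your side remark on nets defined only on $\Omega_\eps$ is also right. In that case the composite is smooth on the open set $\Omega_\eps\cap f_\eps^{-1}(\Omega'_\eps)$, and $X$ is contained in $\langle\Omega_\eps\cap f_\eps^{-1}(\Omega'_\eps)\rangle$ by the argument you gave.
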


\noindent For instance, we can think of the Dirac delta as a map of
the form $\delta:\RC{\rho}\longrightarrow\RC{\rho}$, and therefore
the composition $e^{\delta}$ is defined in $\{x\in\RC{\rho}\mid\exists z\in\RC{\rho}_{>0}:\ \delta(x)\le\log z\}$,
which of course does not contain $x=0$ but only suitable non zero
infinitesimals. On the other hand, $\delta\circ\delta:\RC{\rho}\ra\RC{\rho}$.

The usual rules of differential calculus hold, chain rule included:
$\frac{\partial\left(f\circ g\right)}{\partial v}(x)=\diff{f}\left(g(x)\right).\frac{\partial g}{\partial v}(x)$
if $U\subseteq\rcrho^{n}$ and $V\subseteq\rcrho^{d}$ are open subsets
in the sharp topology and $g\in{}^{\rho}\Gcinf(V,U)$, $f\in{}^{\rho}\Gcinf(U,\rcrho)$.

We also have a generalization of the Taylor formula:
\begin{thm}
\label{thm:Taylor}Let $f\in\gsf(U,\rcrho)$ be a generalized smooth
function defined in the sharply open set $U\subseteq\rcrho^{d}$.
Let $a$, $b\in\rcrho^{d}$ such that the line segment $[a,b]\subseteq U$,
and set $h:=b-a$. Then, for all $n\in\N$ we have
\begin{enumerate}
\item \label{enu:LagrangeRem}$\exists\xi\in[a,b]:\ f(a+h)=\sum_{j=0}^{n}\frac{\diff{^{j}f}(a)}{j!}\cdot h^{j}+\frac{\diff{^{n+1}f}(\xi)}{(n+1)!}\cdot h^{n+1}.$
\item \label{enu:integralRem}$f(a+h)=\sum_{j=0}^{n}\frac{\diff{^{j}f}(a)}{j!}\cdot h^{j}+\frac{1}{n!}\cdot\int_{0}^{1}(1-t)^{n}\,\diff{^{n+1}f}(a+th)\cdot h^{n+1}\,\diff{t}.$
\end{enumerate}
\noindent Moreover, there exists some $R\in\rcrho_{>0}$ such that
\begin{equation}
\forall k\in B_{R}(0)\,\exists\xi\in[a,a+k]:\ f(a+k)=\sum_{j=0}^{n}\frac{\diff{^{j}f}(a)}{j!}\cdot k^{j}+\frac{\diff{^{n+1}f}(\xi)}{(n+1)!}\cdot k^{n+1}\label{eq:LagrangeInfRest-1-1}
\end{equation}
\begin{equation}
\frac{\diff{^{n+1}f}(\xi)}{(n+1)!}\cdot k^{n+1}=\frac{1}{n!}\cdot\int_{0}^{1}(1-t)^{n}\,\diff{^{n+1}f}(a+tk)\cdot k^{n+1}\,\diff{t}\approx0.\label{eq:integralInfRest-1-1}
\end{equation}
\end{thm}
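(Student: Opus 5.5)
Taylor's formula for GSF (Thm.~\ref{thm:Taylor}) reduces to the classical Taylor theorem applied $\eps$-wise to a defining net, together with the moderateness and negligibility bookkeeping that lets the equalities pass to $\rcrho$. I would first fix a defining net $f_\eps\in\cinfty(\Omega_\eps,\R)$ as in Thm.~\ref{thm:propGSF}, and representatives $a=[a_\eps]$, $b=[b_\eps]$, $h_\eps:=b_\eps-a_\eps$.

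The first step is to show that the segments $[a_\eps,b_\eps]_{\R^d}$ lie in $\Omega_\eps$ for $\eps$ small, so that the classical Taylor formula applies at each $\eps$. Since $[a,b]\subseteq U$ and $U$ is sharply open, each point of $[a,b]$ has a ball around it in $U$. If the inclusion failed along a sequence $\eps_k\to0$, I would pick $t_{\eps_k}\in[0,1]$ with $a_{\eps_k}+t_{\eps_k}h_{\eps_k}\notin\Omega_{\eps_k}$, extend $t_\eps$ arbitrarily in $[0,1]$, and obtain a point $c:=a+[t_\eps]h\in[a,b]\subseteq U\subseteq\langle\Omega_\eps\rangle$. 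This contradicts the definition of $\langle\Omega_\eps\rangle$, because it is a representative not eventually in $\Omega_\eps$. Here I use that $[t_\eps]\in[0,1]$, since bounded nets are moderate.

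With the segment in $\Omega_\eps$, the integral remainder formula (\ref{enu:integralRem}) holds exactly for each small $\eps$. Passing to classes is then routine. Each $\diff^jf(a)\cdot h^j=[\diff^jf_\eps(a_\eps)h_\eps^j]$ by Thm.~\ref{thm:FR-forGSF}. The integral term is moderate, since $\diff^{n+1}f_\eps(a_\eps+th_\eps)$ is moderate uniformly in $t$. Uniformity comes from the same contradiction argument: choose maximizing $t_\eps$ and use moderateness at the point $a+[t_\eps]h$. Independence of representatives follows because both sides are well defined and agree $\eps$-wise.

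For the Lagrange form (\ref{enu:LagrangeRem}), the classical mean value form of the remainder gives $\xi_\eps=a_\eps+s_\eps h_\eps$ with $s_\eps\in[0,1]$, valid only in one variable. I would therefore apply it to $g_\eps(t):=f_\eps(a_\eps+th_\eps)$, so that $\xi:=a+[s_\eps]h\in[a,b]$ and $\diff^{n+1}f(\xi)\cdot h^{n+1}=[g_\eps^{(n+1)}(s_\eps)]$.

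For the final claims (\ref{eq:LagrangeInfRest-1-1}) and (\ref{eq:integralInfRest-1-1}), I would take $R$ such that $B_{2R}(a)\subseteq U$, so that all segments $[a,a+k]$ with $k\in B_R(0)$ lie in $U$; the previous parts then give both formulas. Equality of the two remainder expressions is immediate, since both equal $f(a+k)$ minus the same polynomial. The main obstacle is infinitesimality: we need $\sup_{\xi\in B_R(a)}|\diff^{n+1}f(\xi)|\cdot|k|^{n+1}\approx0$. My first attempt would be to bound $M:=\big[\sup_{|y-a_\eps|\le R_\eps}|\diff^{n+1}f_\eps(y)|\big]$, which is moderate by the same contradiction-with-moderateness argument, say $M\le\diff\rho^{-N}$. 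I would then shrink $R$ to $R\le\diff\rho^{N+1}$, so that $M|k|^{n+1}\le\diff\rho^{(N+1)(n+1)-N}\approx0$. Shrinking is harmless because the earlier statements hold on any smaller ball.
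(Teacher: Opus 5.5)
The paper does not prove this theorem; it recalls it from \cite{GKV19}. Your proof is correct and follows essentially the standard argument of the GSF literature. You apply the classical one-variable Taylor formula $\eps$-wise to $t\mapsto f_\eps(a_\eps+th_\eps)$, after the usual contradiction argument shows the segments lie in $\Omega_\eps$ for $\eps$ small (the same device as \cite[Lem.~11]{GKV19}, which the paper invokes in its Brouwer proof). For the infinitesimal remainder, you use an $\eps$-wise extreme-value bound $M\le\diff\rho^{-N}$ for $\diff{}^{n+1}f$ on a closed ball inside $B_{2R}(a)\subseteq U$ and then shrink $R$.

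Your $R$ depends on $n$, and this is the correct reading of the statement. A single $R$ for all $n$ fails in general: for instance, if $\diff{}^{m}f(a)=\diff\rho^{-m^{2}}$, then comparing consecutive remainders at $k=R/2$ would force $\diff\rho^{-(n+1)^{2}}R^{n+1}\approx0$ for every $n$, which is impossible for invertible $R$.
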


Formula \ref{enu:LagrangeRem} corresponds to a direct generalization
of Taylor formulas for ordinary smooth functions with Lagrange remainder.
On the other hand, in \eqref{eq:LagrangeInfRest-1-1} and \eqref{eq:integralInfRest-1-1},
the possibility of the differential $\diff{}^{n+1}f$ being infinite
at some point is considered, and the Taylor formulas are stated so
as to have infinitesimal remainder.

For the embedding of Schwartz distributions, one can proceed exactly
as in Colombeau theory, so we refer to \cite{GKOS,GKV19}. It follows
from \cite[Th.~37]{GKOS} that the special Colombeau algebra $\gs(\Om)$
can be identified, in the special case where $\rho(\eps)=\eps$, with
the algebra $\gsf(\widetilde{\Omega}_{\text{c}},\RC{\rho})$ of GSF
defined on compactly supported points $\widetilde{\Omega}_{\text{c}}:=\{x\in\rti\mid x\text{ is finite},\ \exists r\in\R_{>0}:\ d(z,\partial\Omega)\ge r\}$
of $\Omega$. Therefore, GSF can have more general domains with respect
to Colombeau generalized functions, and this results in the closure
with respect to composition.

\section{\label{sec:Banach-fixed-point}Banach fixed point theorem in $n$-dimensions}

For the sake of clarity, in the present work we start by considering
finite dimensional fixed point theorems with only a \emph{finite}
number of iterations $f^{n}=f\circ\ptind^{n}\circ f$, $n\in\N$.
Only subsequently, in \cite{KeGi24b}, we will consider the case of
a \emph{hyperfinite} $n=[n_{\eps}]\in\rti$, $n_{\eps}\in\N$, number
$f^{n}(x)=[(f_{\eps}\circ\ptind^{n_{\eps}}\circ f_{\eps})(x_{\eps})]$
of iterations.

\subsection{\label{subsec:Finite-Banach-fixed}Banach fixed point theorem in
$d$-dimensions}

Our proof of the Kantorovich theorem for the convergence of Newton-Raphson
process is based on \cite[Thm.~1]{BIA}. The main idea is to prove
this convergence using the Banach fixed point theorem. However, this
process starts from a point $x_{0}\in X$ and recursively defines
only a sequence and not a contraction on the entire subset $X$, see
again \cite[Thm.~1]{BIA}. For this reason, we need the following
\begin{defn}
\label{def:contraction}Let $X\subseteq\rcrho^{d}$ be a sharply closed
subset and $x_{0}\in X$. Then, we say that $g$ is a \emph{contraction
on the orbit starting from $x_{0}\in X$} if:
\begin{enumerate}
\item $g\in\gsf(X,\rti^{d})$;
\item \label{enu:g^nInX}$g^{n}(x_{0})\in X$ for all $n\in\N$;
\item \label{enu:inequalityContraction}$\exists\alpha\in\rcrho_{>0}\,\forall n\in\N:\ |g^{n+1}(x_{0})-g^{n}(x_{0})|\le\alpha^{n}|g(x_{0})-x_{0}|$;
\item \label{enu:limitContraction}$\forall n\in\mathbb{N}:\ \lim_{\substack{n\to+\infty\\
n\in\N
}
}\alpha^{n}=0$, where the limit is taken in the sharp topology, see \eqref{eq:limit}.
\end{enumerate}
Moreover, we say that $g$ is a \emph{contraction on $X$} if $g:X\ra X$
and $|g(x)-g(y)|\le\alpha|x-y|$ for all $x$, $y\in X$. In both
cases, such an $\alpha\in\rcrho$ is called a \emph{contraction constant}
for $g$.
\end{defn}

\noindent We immediately note that \ref{enu:limitContraction} is
one of the peculiar differences with respect to the classical definition
of a contraction in a Banach space. It is a stronger condition because
it entails
\begin{equation}
\forall q\in\N\,\exists N\in\N_{>0}\,\forall n\in\N_{\ge N}:\ \alpha<\diff{\rho}^{\frac{q}{n}}.\label{eq:strongInfinitesimal}
\end{equation}
Therefore, $\alpha$ is necessarily an infinitesimal number; e.g.~any
$0<\alpha\le\diff{\rho}^{k}$, $k\in\N_{>0}$, satisfies \ref{enu:limitContraction}.
Thus a contraction is a generalized function that maps points closer
together and with a strong infinitesimal contraction constant. By
the first order Taylor formula (mean value theorem) with Lagrange
remainder Thm.~\ref{thm:Taylor}.\ref{enu:LagrangeRem}, if $\left|g'(x)\right|\le\alpha$
for all $x\in X$ and $\alpha$ satisfies \eqref{eq:strongInfinitesimal},
then $\alpha$ is a contraction constant for $g$. To overcome this
limitation, we need to consider a hyperfinite $n=[n_{\eps}]\in\rti$,
$n_{\eps}\in\N$, number of iterations. This is considered in the
next work \cite{KeGi24b}.

Thanks to all the preliminary results we have already proved for GSF
(of particular importance from this viewpoint is the closure with
respect to composition, Thm.~\ref{thm:GSFcategory}) the proof of
the Banach fixed theorem is formally similar to the classical one.
\begin{lem}
\label{lem:atMost1}In the assumptions of Def.~\ref{def:contraction},
we have:
\begin{enumerate}
\item A contraction on $X$ is a contraction on the orbit starting from
any point $x_{0}\in X$;
\item A contraction on $X$ can have at most one fixed point.
\end{enumerate}
\end{lem}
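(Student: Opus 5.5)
Both parts are direct consequences of Def.~\ref{def:contraction}. The only work is to use the order of $\rcrho$ carefully, since it is not total and $\rcrho$ has zero divisors. Throughout I read ``contraction on $X$'' as also requiring $g\in\gsf(X,\rti^{d})$, and the constant $\alpha\in\rcrho_{>0}$ to satisfy the limit condition \ref{enu:limitContraction}. This is how I interpret the final sentence of Def.~\ref{def:contraction}, which calls $\alpha$ a contraction constant ``in both cases''. Without \ref{enu:limitContraction} built into the notion of contraction on $X$, part (i) could not hold.

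For (i), fix $x_{0}\in X$ and check the conditions of Def.~\ref{def:contraction} in turn.
\begin{itemize}
\item Condition (i) of the definition, $g\in\gsf(X,\rti^{d})$, holds by assumption.
\item Condition \ref{enu:g^nInX} follows by induction on $n$ from $g:X\ra X$, since $g^{n}(x_{0})=g(g^{n-1}(x_{0}))$ with $g^{n-1}(x_{0})\in X$.
\item For \ref{enu:inequalityContraction}, induct on $n$. The case $n=0$ is trivial. For the inductive step, apply the Lipschitz inequality to $x=g^{n}(x_{0})$ and $y=g^{n-1}(x_{0})$, both in $X$, to get $|g^{n+1}(x_{0})-g^{n}(x_{0})|\le\alpha|g^{n}(x_{0})-g^{n-1}(x_{0})|$. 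Then multiply the inductive hypothesis by $\alpha\ge0$. This step uses that multiplication by a nonnegative generalized number preserves $\le$. I verify that on representatives: if $a_{\eps}\le b_{\eps}+z_{\eps}$ with $(z_{\eps})$ negligible and $\alpha_{\eps}>0$, then $\alpha_{\eps}a_{\eps}\le\alpha_{\eps}b_{\eps}+\alpha_{\eps}z_{\eps}$, and $(\alpha_{\eps}z_{\eps})$ is still negligible because $(\alpha_{\eps})$ is moderate.
\item Condition \ref{enu:limitContraction} is exactly the assumed property of $\alpha$.
\end{itemize}

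For (ii), let $x=g(x)$ and $y=g(y)$ be fixed points in $X$. Then $|x-y|=|g(x)-g(y)|\le\alpha|x-y|$. Iterating with the same monotonicity of multiplication gives $0\le|x-y|\le\alpha^{n}|x-y|$ for every $n\in\N$. Since $\alpha^{n}\to0$ sharply and multiplication by the fixed element $|x-y|$ is sharply continuous, $\alpha^{n}|x-y|\to0$. From the squeeze I get $|x-y|<\diff\rho^{q}$ for every $q$, so $|x-y|$ lies in every infinitesimal ball around $0$. By Hausdorffness of $\rcrho$ this forces $|x-y|=0$, hence $x=y$.

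An alternative argument for (ii) uses \eqref{eq:strongInfinitesimal}: $\alpha$ is infinitesimal, so $1-\alpha>\frac{1}{2}$ is positive and invertible. Multiplying $(1-\alpha)|x-y|\le0$ by $(1-\alpha)^{-1}$ then gives $|x-y|\le0$.

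The main obstacle is exactly this last point in (ii). In $\rcrho$ one cannot simply cancel $|x-y|$ from $|x-y|\le\alpha|x-y|$, because $|x-y|$ may be a nonzero non-invertible element (a zero divisor) and the order is not total. So the argument must go either through invertibility of $1-\alpha$, or through the sharp limit and Hausdorffness as above, rather than through a case distinction on whether $|x-y|>0$.
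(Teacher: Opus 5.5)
Your proof is correct, and part (i) is the paper's argument (induction on \ref{enu:inequalityContraction}). You are right that \ref{enu:limitContraction} must be read into the notion of contraction on $X$. The paper tacitly does the same, since its proof of (ii) invokes \eqref{eq:strongInfinitesimal}. Indeed, $g(x)=x/2$ on $X=\rcrho$ with $x_0=1$ is Lipschitz with constant $\frac12$, but \ref{enu:inequalityContraction} forces $\alpha\ge\frac12$, and then \ref{enu:limitContraction} fails.

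For (ii), the paper takes your \emph{alternative} route. It uses \eqref{eq:strongInfinitesimal} to get $\alpha<1$, then concludes $|x-y|=0$ from $|x-y|=|g(x)-g(y)|\le\alpha|x-y|$. As printed, that line even drops the factor $\alpha$ and never justifies the cancellation. Your step of multiplying $(1-\alpha)|x-y|\le0$ by the positive invertible $(1-\alpha)^{-1}$ is exactly what makes it rigorous in a ring with zero divisors.

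Your main argument is genuinely different: you iterate to $|x-y|\le\alpha^{n}|x-y|$, pass to the sharp limit, and use Hausdorffness. It uses \ref{enu:limitContraction} directly, so it does not depend on extracting infinitesimality of $\alpha$, which the paper only asserts in \eqref{eq:strongInfinitesimal}. The cancellation argument, on the other hand, is more general. It only needs $1-\alpha$ to be positive and invertible, so it gives uniqueness of fixed points for any Lipschitz constant, including a standard $\alpha=\frac12$. In that case $\alpha^{n}\not\to0$ sharply and your limit argument is unavailable.
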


\begin{proof}
The first property simply follows by proving by induction the inequality
\ref{enu:inequalityContraction} of Def.~\ref{def:contraction}.
For the second claim, let us assume that $x$, $y\in X\in\rcrho$
are fixed points of $g$, so that $g(x)=x$ and $g(y)=y$. Since $\alpha<1$,
as seen in \eqref{eq:strongInfinitesimal} and by \ref{enu:inequalityContraction}
in Def.~\ref{def:contraction}, we have that $\left|g(x)-g(y)\right|=|x-y|\le|x-y|$,
i.e.~$|x-y|=0$ and hence $x=y$.
\end{proof}
\begin{thm}[Banach]
\label{thm:BFPT}Let $x_{0}\in X\subseteq\rcrho^{d}$ be a sharply
closed subset. Let $g:X\longrightarrow\rti^{d}$ be a contraction
on the orbit starting from $x_{0}$. Then $\{g^{n}(x_{0})\}_{n\in\mathbb{N}}$
is a Cauchy sequence in the sharp topology and its limit $x^{*}\in X$
is a fixed point of $g$.
\end{thm}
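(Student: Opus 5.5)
We follow the classical argument, writing $x_n:=g^n(x_0)$. By Def.~\ref{def:contraction}.\ref{enu:g^nInX} every $x_n$ lies in $X$. By Thm.~\ref{thm:GSFcategory} each iterate $g^n$ is a well-defined GSF on the relevant domain, so all these points make sense.

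\textbf{Cauchy estimate.} For $m>n$, the triangle inequality for the generalized norm together with Def.~\ref{def:contraction}.\ref{enu:inequalityContraction} gives
\[
|x_m-x_n|\le\sum_{k=n}^{m-1}|x_{k+1}-x_k|\le\Big(\sum_{k=n}^{m-1}\alpha^k\Big)|x_1-x_0|.
\]
We cannot sum a geometric series with ratio $\alpha<1$ in the usual way, because $1-\alpha$ need not be treated naively. However, \eqref{eq:strongInfinitesimal} shows that $\alpha$ is infinitesimal, so $\alpha\le\frac12$. Since $\alpha\ge0$, this gives $\alpha^k\le\alpha^n 2^{-(k-n)}$ for $k\ge n$, and hence $\sum_{k=n}^{m-1}\alpha^k\le2\alpha^n$. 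Therefore $|x_m-x_n|\le2\alpha^n|x_1-x_0|$.

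\textbf{Main obstacle: turning this into smallness.} The factor $|x_1-x_0|$ is a generalized number that may be infinite. It is moderate, though, so $|x_1-x_0|\le\diff\rho^{-N}$ for some $N\in\N$. Fix $q\in\R_{>0}$. Apply Def.~\ref{def:contraction}.\ref{enu:limitContraction} in the form \eqref{eq:limit}, with radius $\diff\rho^{q+N+1}$, to obtain $M$ such that $\alpha^n<\diff\rho^{q+N+1}$ for all $n>M$. Then $2\alpha^n|x_1-x_0|<2\diff\rho^{q+1}<\diff\rho^q$. This proves that $(x_n)$ is Cauchy in the sharp topology. The checks needed are that products and inequalities are compatible: they are, since all factors are $\ge0$, and strict inequality against invertible quantities is preserved by Lem.~\ref{lem:mayer}.

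\textbf{Limit and fixed point.} By Cauchy completeness of $\rcrho^d$ (componentwise from that of $\rcrho$), the sequence converges to some $x^*$. Since $X$ is sharply closed and $x_n\in X$, we get $x^*\in X$. Finally, $g$ is sharply continuous on $X$ by Thm.~\ref{thm:propGSF}.\ref{enu:GSF-cont}. Hence $g(x^*)=\lim g(x_n)=\lim x_{n+1}=x^*$, where uniqueness of limits uses that the sharp topology is Hausdorff.
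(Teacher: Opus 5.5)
Your proof is correct and follows essentially the same route as the paper: the triangle inequality combined with the orbit estimate \ref{enu:inequalityContraction} of Def.~\ref{def:contraction} gives the Cauchy property, and then completeness of $\rcrho^{d}$, sharp closedness of $X$, and sharp continuity of $g$ show that the limit is a fixed point in $X$. The differences are only in the details. You bound the geometric sum by $2\alpha^{n}$ instead of using the closed form $\frac{\alpha^{n}-\alpha^{m}}{1-\alpha}$; that closed form is also legitimate, since $\alpha\approx0$ makes $1-\alpha$ invertible. You also make explicit, via moderateness of $|g(x_0)-x_0|$, the step the paper leaves implicit when it says this estimate ``implies'' the Cauchy property.
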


\begin{proof}
Using the contraction property \ref{enu:inequalityContraction} of
Def.~\ref{def:contraction}, for all $n$, $m\in\mathbb{N}$, $n<m$,
we have:
\begin{align*}
|g^{m}(x_{0})-g^{n}(x_{0})| & \le|g^{m}(x_{0})-g^{m-1}(x_{0})|+\dots+|g^{n+1}(x_{0})-g^{n}(x_{0})|\\
 & \le\alpha^{n}\left(\sum_{j=0}^{m-n-1}\alpha^{j}\right)|g(x_{0})-x_{0}|\\
 & =\alpha^{n}\left(\frac{1-\alpha^{m-n}}{1-\alpha}\right)|g(x_{0})-x_{0}|\\
 & =\frac{\alpha^{n}-\alpha^{m}}{1-\alpha}|g(x_{0})-x_{0}|
\end{align*}
which implies that $\{g^{n}(x_{0})\}_{n\in\mathbb{N}}$ is a Cauchy
sequence of $X$ because of \ref{enu:g^nInX} and \ref{enu:limitContraction}
of Def.~\ref{def:contraction}. Since $X$ is complete in the sharp
topology, the sequence $\{g^{n}(x_{0})\}_{n\in\mathbb{N}}$ has a
limit $x^{*}\in X$. By the continuity of $g$ in the sharp topology
(Thm. \ref{thm:propGSF}.\ref{enu:GSF-cont}), we have the conclusion:

\[
g(x^{*})=g\Big(\lim_{n\in\mathbb{N}}g^{n}(x_{0})\Big)=\lim_{n\in\mathbb{N}}g^{n+1}(x_{0})=x^{*}.
\]
\end{proof}
\noindent For Banach fixed point theorem in the setting of Colombeau
algebra, see \cite{Ma1,JuOl1}.

To finish this section, we will state and prove Brouwer's fixed point
Theorem for generalized smooth functions.
\begin{defn}
\label{def:GCk}Let $U\subseteq\rti^{n}$, $Y\subseteq\rti^{d}$,
and $k=0$, $1$. Then, we say that $f:U\ra Y$ is a generalized $\mathcal{C}^{k}$-function
($f\in\gsfk k(U,Y)$), if there exists a net $f_{\eps}\in\mathcal{C}^{k}(U_{\eps},\R^{d})$
defining $f$ in the sense that:
\begin{enumerate}
\item \label{enu:domainGSF}$U\subseteq\left\langle U_{\eps}\right\rangle $;
\item \label{enu:equalityGSF}$f\left(\left[x_{\eps}\right]\right)=\left[f_{\eps}(x_{\eps})\right]\in Y$
for all $x=\left[x_{\eps}\right]\in U$;
\item \label{enu:moderateGSF}$\left(\partial^{\alpha}f_{\eps}(x_{\eps})\right)$
is $\rho$-moderate for all $x=[x_{\eps}]\in U$ and all multi-index
$\alpha\in\N^{n}$ such that $|\alpha|\leq k$;
\item \label{enu:GCksharpCont}For all multi-index $\alpha\in\N^{n}$ with
$|\alpha|=k$, the map $[x_{\eps}]\in U\mapsto\left[\partial^{\alpha}f_{\eps}(x_{\eps})\right]\in Y$
is well-defined and sharply continuous.
\end{enumerate}
\end{defn}

\noindent Actually, this definition works for any $k\in\N\cup\{+\infty]$,
and taking $k=+\infty$, property \ref{enu:GCksharpCont} can be proved
using moderateness of every derivative \ref{enu:moderateGSF} , see
Def.~\ref{def:netDefMap}, Thm.~\ref{thm:propGSF}, but also \cite[Thm. 16]{GKV19}
and \cite[Thm.~17]{GKV19} for the details.

Note, however, that the absolute value function $|-|:\rti\ra\rti$
is \emph{not} a $\gsfk1$-function because its derivative is not sharply
continuous at the origin, even though its right and left derivatives
exist; clearly, it is a $\gsfk0$-function. On the other hand, the
embedding theorem \cite[Thm.~25]{GKV19} (or any other regularization
process with a smooth mollifier we are interested to consider) yields
a GSF $\text{abs}(-)\in\gsf(\rti,\rti)$ as an embedding of $x\in\R\mapsto|x|\in\R$.
However, the regularization process results in $\text{abs}(x)\approx|x|$
and $\text{abs}(x)=|x|$ only if $|x|\ge r$ for some $r\in\R_{>0}$.
On the other hand, it is easy to prove that $x\in X\mapsto|f(x)|\in\rti$
is always sharply continuous if $f\in\gsf(X,\rti^{d})$.
\begin{thm}[Brouwer]
Every generalized continuous map $f\in\gsfk0([0,1]^{d},[0,1]^{d})$
has a fixed point.
\end{thm}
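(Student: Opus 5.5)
The plan is to reduce the statement to the classical Brouwer theorem applied $\eps$-wise to a suitably modified representative net, and then pass to equivalence classes. Let $f_{\eps}\in\mathcal{C}^{0}(U_{\eps},\R^{d})$ define $f$ as in Def.~\ref{def:GCk}, with $[0,1]^{d}\subseteq\langle U_{\eps}\rangle$. The classical theorem cannot be applied to $f_{\eps}$ directly. For one thing, $f_{\eps}$ need not be defined on all of $[0,1]_{\R}^{d}$. For another, even where it is defined, it need not map $[0,1]^{d}_{\R}$ into itself, because the condition $f(x)\in[0,1]^{d}$ holds only up to negligible nets and only along representatives of points of the domain. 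I will therefore replace $f_{\eps}$ by $\tilde f_{\eps}:=\pi\circ f_{\eps}\circ\pi$, where $\pi:\R^{d}\to[0,1]^{d}_{\R}$ is the componentwise clamp $\pi(y)_{i}=\min(\max(y_{i},0),1)$.

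The first step is to check that $\tilde f_{\eps}$ is defined and continuous on $[0,1]^{d}_{\R}$ for $\eps$ small. I expect this to be the main obstacle. The issue is that $\langle U_{\eps}\rangle\supseteq[0,1]^{d}$ is a statement about generalized points, not a statement that $[0,1]^{d}_{\R}\subseteq U_{\eps}$ for small $\eps$. I would argue by contradiction. If there were $\eps_{k}\to0$ and $p_{k}\in[0,1]^{d}_{\R}\setminus U_{\eps_{k}}$, then I would define a net $x_{\eps}:=p_{k}$ for $\eps=\eps_{k}$ and $x_{\eps}:=0$ otherwise. This net is moderate, and $[x_{\eps}]\in[0,1]^{d}$, yet $x_{\eps_{k}}\notin U_{\eps_{k}}$ along a sequence tending to $0$. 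That contradicts the definition of $\langle U_{\eps}\rangle$. Hence $[0,1]^{d}_{\R}\subseteq U_{\eps}$ for all $\eps\le\eps_{0}$. For $\eps>\eps_{0}$ I simply set $\tilde f_{\eps}$ to be a constant, which does not affect the class. This ``$\forall^{0}\eps$ via sequence-gluing'' trick is standard in the theory, and I would rely on it.

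Next, classical Brouwer gives, for each $\eps$, a point $x_{\eps}\in[0,1]^{d}_{\R}$ with $\pi(f_{\eps}(x_{\eps}))=x_{\eps}$. The net $(x_{\eps})$ is bounded, hence moderate, and $x:=[x_{\eps}]\in[0,1]^{d}$. Since $x\in[0,1]^{d}$, property \ref{enu:equalityGSF} of Def.~\ref{def:GCk} gives $f(x)=[f_{\eps}(x_{\eps})]\in[0,1]^{d}$. This means $-z_{\eps}\le f_{\eps}(x_{\eps})_{i}\le1+z_{\eps}$ for some negligible $(z_{\eps})$. Because $\pi$ is $1$-Lipschitz and fixes $[0,1]^{d}_{\R}$, it follows that $|\pi(f_{\eps}(x_{\eps}))-f_{\eps}(x_{\eps})|\le\sqrt{d}\,z_{\eps}$, which is negligible. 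Therefore $[\pi(f_{\eps}(x_{\eps}))]=f(x)$.

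Finally, combining the previous two steps gives $x=[x_{\eps}]=[\pi(f_{\eps}(x_{\eps}))]=f(x)$, which is the desired fixed point. I note that sharp continuity, property \ref{enu:GCksharpCont} of Def.~\ref{def:GCk}, is not actually needed here. Only the $\eps$-wise continuity of $f_{\eps}$ is used, together with the well-definedness of $f$ on representatives.
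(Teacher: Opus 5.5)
Your proposal is correct and follows the same route as the paper. Both show $[0,1]^{d}_{\R}\subseteq U_{\eps}$ for small $\eps$ by contradiction, clamp the representatives into $[0,1]^{d}_{\R}$, apply the classical Brouwer theorem $\eps$-wise, and take the class of the resulting fixed points. Your explicit check that clamping changes $f_{\eps}(x_{\eps})$ only by a negligible net, because $f(x)\in[0,1]^{d}$, makes precise the step the paper states as a change of representatives.
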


\begin{proof}
Let $f_{\eps}\in\mathcal{C}^{0}(U_{\eps},\R^{d})$ be continuous representatives
of $f$, so that $\left\langle U_{\eps}\right\rangle \supseteq[0,1]^{d}=[[0,1]_{\mathbb{R}}^{d}]\subseteq\rcrho^{d}$.
Classically proceeding by contradiction (see also \cite[Lem.~11]{GKV19}),
we have $\Omega_{\eps}\supseteq[0,1]_{\mathbb{R}}^{d}$ for all $\eps$
small. This means that $f_{\eps}$ , which is continuous, can be restricted
to $f_{\eps}|_{[0,1]^{d}}\ra\mathbb{R}^{d}$ and is continuous. Now,
the case might occur, that one component of $f_{\eps}^{j}$, for $j=1,2,...,d$
can be greater than $1$ or less than $0$ up to a negligible amount.
To deal with this case, we can take new representatives by setting
$\bar{f}_{\eps}$ as $\bar{f}_{\eps}^{j}=\min(f_{\eps}^{j},1)$.

\noindent We need to show that this function is continuous, but this
follows similarly to the standard case where we can define the $\min$
as

\[
\bar{f}_{\eps}^{j}=\min(f_{\eps}^{j},1)=-\max(-f_{\eps}^{j},-1)=-(\dfrac{1}{2}(-f_{\eps}^{j}-1+\lvert-f_{\eps}^{j}+1\rvert)).
\]

\noindent The last expression is a composition of continuous functions,
hence $\bar{f}_{\eps}=\min(f_{\eps}^{j},1)$ is continuous too. The
similar procedure works considering $\max(f_{\eps}^{j},1).$ This
yields that the initial function can be written as $f(x)=[(\bar{f_{\eps}^{1}}(x_{\eps}),\dots,\bar{f_{\eps}^{d}}(x_{\eps})]$,
i.e.~we can change representatives so that $f_{\eps}$ remains below
1 and above 0.

We therefore proved that we can find new representatives such that

\[
\forall^{0}\eps:\ \bar{f}_{\eps}\in\mathcal{C}^{0}([0,1]_{\mathbb{R}}^{d},[0,1]_{\mathbb{R}}^{d}).
\]

\noindent We can hence apply the standard version of Brouwer's Fixed
point theorem to find a fixed point $x_{\eps}\in[0,1]_{\mathbb{R}}^{d}$:
\[
\bar{f_{\eps}}(x_{\eps})=x_{\eps}.
\]

\noindent Setting $x:=[x_{\eps}]\in[0,1]^{d}$ finishes the proof.
\end{proof}
\noindent Like in the classical case, via translation this result
can be extended to any interval $[a,b]$, $a<b\in\rcrho$.

\section{\label{sec:Newton's-method}Newton-Raphson method in $d$-dimensions}

In this section, we consider Newton's method for GSF, its reduction
to the previous Banach fixed point theorem, as formulated by \cite{BIA},
and its quadratic convergence. We clearly always assume that the dimension
$d\ge1$.
\begin{defn}
\label{def:NRP}Let $U\subseteq\rcrho^{d}$ and $f\in\Gcinf(U,\rcrho^{d})$.
We say that $(x_{n})_{n\in\N}$ is \emph{a Newton(-Raphson) process
for $f$} if for all $n\in\N$ we have:
\begin{enumerate}
\item $x_{n}\in U$;
\item $\diff f(x_{n})$ is invertible;
\item $x_{n+1}=x_{n}-[\diff f(x_{n})]^{-1}f(x_{n})$.
\end{enumerate}
\end{defn}

\noindent On the basis of Thm.~\ref{thm:FR-forGSF}, we can still
say that $x_{n+1}$ is the point where the tangent line (if $d=1$)
at $f(x_{n})$ intersects the $x$-axis. Note, however, that $x_{n+1}=[x_{n+1,\eps}]\in\rcrho$
and hence this tangent line can be represented as an $\eps$-moving
ordinary tangent line at $f(x_{n})=[f_{\eps}(x_{n\eps})]$, which
could also assume infinite or infinitesimal values.

In order to derive the correctness of Newton's method from the Banach
contraction principle Thm.~\ref{thm:BFPT}, we define $g$ as:

\begin{equation}
g(x):=x-[\diff f(x)]^{-1}f(x)\label{eq:reductionNewtonToBanach}
\end{equation}
for all $x\in U$ such that $\diff f(x)$ is invertible. Note that
$x^{*}$ is a fixed point of $g$ if and only if it is a root of $f$.

Concerning invertibility of the differential, we have the following
\begin{lem}
Let $U$ be a sharply open set in $\rcrho$, $f\in\Gcinf(U,\rcrho)$
and $u\in U$ such that $\diff f(u)$ is invertible. Then there exists
a neighbourhood $V$ of $u$ such that $\diff f(v)$ is invertible
for all $v\in V$.
\end{lem}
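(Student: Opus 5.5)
The plan is to combine sharp continuity of the derivative with Lem.~\ref{lem:mayer}. Since $U\subseteq\rcrho$ is sharply open and $f\in\Gcinf(U,\rcrho)$, Thm.~\ref{thm:FR-forGSF}.\ref{enu:defDer} gives $f'=\frac{\partial f}{\partial 1}\in\gsf(U,\rcrho)$. Hence, by Thm.~\ref{thm:propGSF}.\ref{enu:GSF-cont}, $f'$ is sharply continuous on $U$, and so is $v\mapsto|f'(v)|$ (absolute value is sharply continuous, as remarked after Def.~\ref{def:GCk}). In dimension one, $\diff f(v)$ is invertible iff the number $f'(v)$ is invertible, iff $|f'(v)|>0$.

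\textbf{Step 1: a positive lower bound at $u$.} Since $f'(u)$ is invertible, $|f'(u)|$ is invertible and $\ge0$, so $|f'(u)|>0$. By Lem.~\ref{lem:mayer}, for any representative $(f'_\eps(u_\eps))$ there is $m\in\N$ with $|f'_\eps(u_\eps)|>\rho_\eps^m$ for $\eps$ small. This means $|f'(u)|\ge\diff\rho^{m}$; I will then take the radius $r:=\frac12\diff\rho^{m}\in\rcrho_{>0}$.

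\textbf{Step 2: continuity gives a neighbourhood.} By sharp continuity of $f'$ at $u$ and openness of $U$, there is $s\in\rcrho_{>0}$ such that $V:=B_s(u)\subseteq U$ and $|f'(v)-f'(u)|<r$ for all $v\in V$. The norm properties from Sec.~\ref{subsec:Topologies} then give
\[
|f'(v)|\ge|f'(u)|-|f'(v)-f'(u)|\ge\diff\rho^m-r=\tfrac12\diff\rho^m .
\]
Strictly, this argument uses only the weak inequality $\le r$, which is enough.

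\textbf{Step 3: conclude invertibility.} We have $|f'(v)|\ge\frac12\diff\rho^m$. Choosing representatives, this means $|f'_\eps(v_\eps)|\ge\frac12\rho_\eps^m-z_\eps$ with $z_\eps$ negligible, so $|f'_\eps(v_\eps)|>\rho_\eps^{m+1}$ for $\eps$ small. By Lem.~\ref{lem:mayer}, $|f'(v)|$ is then invertible, hence so is $f'(v)$, since $f'(v)\cdot\frac{f'(v)}{|f'(v)|^{2}}=1$ using $|x|^2=x^2$.

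The only delicate point is this last step. The order on $\rcrho$ is not total, so invertibility cannot be read off from a mere pointwise nonvanishing; it really requires the uniform lower bound $\diff\rho^{m}$ with a fixed $m$. This is why the radius must be chosen as an invertible fraction of that bound, rather than by an argument like "$f'(v)$ close to an invertible number".
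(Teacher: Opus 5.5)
Your proof is correct and follows essentially the same route as the paper. Both use Lem.~\ref{lem:mayer} to get a lower bound for $|\diff f(u)|$, then sharp continuity of $\diff f$ (a GSF by Thm.~\ref{thm:FR-forGSF}) with an invertible radius below that bound, then the reverse triangle inequality and Lem.~\ref{lem:mayer} again. The differences are cosmetic: the paper takes radius $\diff\rho^{2q}$ and reads off $|\diff f(v)|>\diff\rho^{3q}$ directly, while you take $\frac12\diff\rho^{m}$, pass through representatives, and also spell out why invertibility of $|f'(v)|$ gives invertibility of $f'(v)$.
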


\begin{proof}
Since $\diff f(u)$ is invertible, we have $|\diff f(u)|>0$ by $|\diff f(u)|=[|\diff f_{\eps}(u)|]$
and Lem.~\ref{lem:mayer}. This implies $|\diff f(u)|>\diff\rho^{q}$
for some $q\in\mathbb{N}$, once again from Lem.~\ref{lem:mayer}.
Thanks to Thm.~\ref{thm:FR-forGSF}, we have that $\diff f$ is also
a GSF; thereby, sharp continuity Thm.~\ref{thm:propGSF}.\ref{enu:GSF-cont}
yields $\lvert\diff f(v)-\diff f(u)\rvert<\diff\rho^{2q}$ for $v$
sufficiently near to $u$, let us say $v\in B_{\diff\rho^{Q}}(u)$.
Using the last property of norm listed at the beginning of Sec.~\ref{subsec:Topologies},
we have 
\[
\lvert\lvert\diff f(v)\rvert-\lvert\diff f(u)|\rvert\le\lvert\diff f(v)-\diff f(u)\rvert<\diff\rho^{2q}.
\]

\noindent This means, that 
\[
\diff\rho^{3q}\le\diff\rho^{q}-\diff\rho^{2q}<\lvert\diff f(u)|-\diff\rho^{2q}<\lvert\diff f(v)\rvert
\]
for all $v\in B_{\diff\rho^{Q}}(u)$, which is the conclusion by Lem.~\ref{lem:mayer}.
\end{proof}
It is well-known that, in general, Newton-Raphson iterations do not
converge, unless suitable conditions on the function $f$ are satisfied.
For example, even the smooth case $f(x):=27x^{3}-3x+1$ has only one
negative root $x^{*}\simeq-0.44$, but starting from $x_{0}=0$, we
have $x_{1}=1/3$ and $x_{n+1}>x_{n}>0$, e.g.~$x_{2}=1/6$, $x_{3}=1$,
$x_{4}\simeq0.68$, etc. In this case, the tangent line at the initial
point $x_{0}=0$ has negative slope, and therefore intersects the
$x$-axis at a point that is farther away from the negative root.
On the other hand, even in this case, the method converges if the
initial point $x_{0}$ is ``sufficiently near to the root $x^{*}$'',
e.g.~for $x_{0}<-\sqrt{3}/9$, where $f'(x_{0})>0$.

For these reasons, in literature we can find several sufficient conditions
for the convergence of Newton-Raphson iterations, see e.g.~\cite{Kan1,Kan2,Os1,Den1,Ort1}

The next result is inspired, in the setting of GSF, by \cite[Thm.~1]{BIA}.
\begin{thm}[Newton-Raphson, Ben-Israel]
\label{thm:NBI}Let $x_{0}\in\rti^{d}$, $r\in\rti_{>0}$, and $f\in\gsf(B_{r}(x_{0}),\rcrho^{d})$
be such that $\diff f(u)$ is invertible for all $u\in B_{r}(x_{0})$.
Let $M$, $N$, $k\in\rcrho_{>0}$ be such that for all $u$, $v\in B_{r}(x_{0})$,
we have: 
\begin{align}
|\diff f(v)(u-v)-f(u)+f(v)| & \le M|u-v|\label{eq:8BI}\\
|(\diff f(v)^{-1}-\diff f(u)^{-1})f(u)| & \le N|u-v|\label{eq:9BI}
\end{align}
and
\begin{align}
M|\diff f(x)^{-1}|+N & \le k\quad\forall x\in B_{r}(x_{0})\label{eq:10BI}\\
\lim_{n\to+\infty}k^{n} & =0.\label{eq:10bisBI}
\end{align}

\begin{equation}
|\diff f(x_{0})^{-1}||f(x_{0})|\le(1-k)r.\label{eq:Jacob}
\end{equation}
Then the sequence
\begin{equation}
x_{n+1}=x_{n}-(\diff f(x_{n}))^{-1}f(x_{n}),\quad\forall n\in\mathbb{N}\label{eq:withJacob}
\end{equation}
converges to a solution $x^{*}\in B_{r}(x_{0})$ of $f(x^{*})=0$.
\end{thm}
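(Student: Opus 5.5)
The plan is to reduce the statement to the Banach fixed point theorem, Thm.~\ref{thm:BFPT}, applied to the map $g(x):=x-[\diff f(x)]^{-1}f(x)$ of \eqref{eq:reductionNewtonToBanach}. This map is defined on all of $B_{r}(x_{0})$ because $\diff f(u)$ is invertible there. By Thm.~\ref{thm:FR-forGSF}, Thm.~\ref{thm:GSFcategory} and the smoothness of matrix inversion on invertible nets, $g$ is a GSF on $B_{r}(x_{0})$. The sequence \eqref{eq:withJacob} is exactly $x_{n}=g^{n}(x_{0})$. So it suffices to show that $g$ is a contraction on the orbit starting from $x_{0}$ in the sense of Def.~\ref{def:contraction}, with contraction constant $k$. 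Then Thm.~\ref{thm:BFPT} yields the limit $x^{*}$ with $g(x^{*})=x^{*}$, i.e.\ $f(x^{*})=0$.

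\textbf{Key estimate.} The first step is a Lipschitz-type bound for $g$ at pairs $u,v\in B_{r}(x_{0})$. I would write
\[
g(u)-g(v)=\diff f(v)^{-1}\bigl[\diff f(v)(u-v)-f(u)+f(v)\bigr]+\bigl(\diff f(v)^{-1}-\diff f(u)^{-1}\bigr)f(u).
\]
This identity is checked by expanding: the first term gives $u-v-\diff f(v)^{-1}f(u)+\diff f(v)^{-1}f(v)$, and adding the second term gives $u-v-\diff f(u)^{-1}f(u)+\diff f(v)^{-1}f(v)=g(u)-g(v)$. Using the operator norm $|\diff f(v)^{-1}|$, \eqref{eq:8BI}, \eqref{eq:9BI} and \eqref{eq:10BI}, this gives
\[
|g(u)-g(v)|\le\bigl(M|\diff f(v)^{-1}|+N\bigr)|u-v|\le k|u-v|.
\]

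\textbf{Induction on the orbit.} Next I prove by induction on $n$ that $x_{n}\in B_{r}(x_{0})$ and $|x_{n+1}-x_{n}|\le k^{n}|x_{1}-x_{0}|$. The second inequality follows from the key estimate with $u=x_{n}$, $v=x_{n-1}$, once both lie in the ball. Note that $|x_{1}-x_{0}|\le|\diff f(x_{0})^{-1}||f(x_{0})|\le(1-k)r$ by \eqref{eq:Jacob}. Hence
\[
|x_{n+1}-x_{0}|\le\sum_{j=0}^{n}k^{j}(1-k)r=(1-k^{n+1})r.
\]
To get $x_{n+1}\in B_{r}(x_{0})$ we need $r-(1-k^{n+1})r=k^{n+1}r>0$, which holds because $k>0$ is invertible. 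Condition \eqref{eq:10bisBI} is exactly Def.~\ref{def:contraction}.\ref{enu:limitContraction}, and it also forces $k$ to be infinitesimal, so $1-k$ is invertible and positive.

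\textbf{Main obstacle.} Thm.~\ref{thm:BFPT} requires a sharply closed $X$, while $B_{r}(x_{0})$ is open, so the limit must be shown to stay inside the ball. The plan is to pass to the limit in $|x_{n}-x_{0}|\le(1-k^{n})r$, using sharp continuity of $|\cdot|$ and $k^{n}\to0$. This gives $|x^{*}-x_{0}|\le r(1-k)$ (taking the bound from $n\ge1$ with $k^{n}\ge0$, and refining via the tail estimate $|x^{*}-x_{n}|\le\frac{k^{n}}{1-k}|x_{1}-x_{0}|$). Since $kr>0$ is invertible, $x^{*}\in B_{r}(x_{0})$.

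Concretely, I would therefore apply Thm.~\ref{thm:BFPT} with $X$ the sharply closed ball $\overline{B}:=\{x\mid|x-x_{0}|\le(1-k)r\}\subseteq B_{r}(x_{0})$. The orbit lies in $\overline{B}$, since $(1-k^{n})r\le(1-k)r$ for $n\ge1$ because $k^{n}\le k$ (as $k<1$). The map $g$ restricted to $\overline{B}$ is still a GSF, and sharp continuity of $g$ at $x^{*}$ then gives $g(x^{*})=x^{*}$. The remaining care points are the ordering facts in the non-totally ordered ring $\rti$, such as $k^{n}\le k$ and the invertibility of $kr$; these can be checked representative-wise via Lem.~\ref{lem:mayer}.
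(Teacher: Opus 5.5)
Up to the last step your argument is the paper's: the same map $g$, the same splitting of $g(u)-g(v)$ into the terms controlled by \eqref{eq:8BI} and \eqref{eq:9BI}, and the same induction keeping the orbit in $B_{r}(x_{0})$ via $|x_{n+1}-x_{0}|\le(1-k^{n+1})r$. You also rightly notice something the paper glosses over. The paper writes $g:\overline{B_{r}(x_{0})}\to\rti^{d}$, although $f$ is only defined on the open ball. So Thm.~\ref{thm:BFPT} must be applied on a sharply closed set inside $B_{r}(x_{0})$ that contains the orbit.

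\textbf{The gap.} Your fix for this does not work. From $k^{n}\le k$ you get $1-k^{n}\ge1-k$, so $(1-k^{n})r\ge(1-k)r$, not $\le$. Hence the orbit need not stay in $\{x\mid|x-x_{0}|\le(1-k)r\}$; it already fails for $x_{2}$ when $|x_{1}-x_{0}|=(1-k)r$ and the second step points the same way as the first. Passing to the limit gives only $|x^{*}-x_{0}|\le r$, whether you use $|x_{n}-x_{0}|\le(1-k^{n})r$ directly or add the tail estimate, since $(1-k^{n})r+k^{n}r=r$. With the non-strict \eqref{eq:Jacob} and contraction constant $k$ this is exactly borderline. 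Nothing makes $r-|x^{*}-x_{0}|$ invertible, so neither $x^{*}\in B_{r}(x_{0})$ nor $g(x^{*})=x^{*}$ follows; the latter needs $x^{*}$ in the domain of $g$.

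\textbf{The missing idea.} You need a strictly better ratio along the orbit, which follows from \eqref{eq:8BI} and \eqref{eq:10BI} alone. Since $f(x_{n})+\diff f(x_{n})(x_{n+1}-x_{n})=0$, we have $f(x_{n+1})=-\bigl[\diff f(x_{n})(x_{n+1}-x_{n})-f(x_{n+1})+f(x_{n})\bigr]$. Hence
\[
|x_{n+2}-x_{n+1}|\le M|\diff f(x_{n+1})^{-1}|\,|x_{n+1}-x_{n}|\le(k-N)|x_{n+1}-x_{n}|.
\]
Inductively this gives
\[
|x_{n}-x_{0}|\le\frac{|x_{1}-x_{0}|}{1-k+N}\le s:=\frac{(1-k)r}{1-k+N},
\]
and $r-s=\frac{Nr}{1-k+N}$ is invertible because $N>0$. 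So the orbit lies in $X:=\{x\mid|x-x_{0}|\le s\}\subseteq B_{r}(x_{0})$. This set is sharply closed, because $a\le b+\diff\rho^{q}$ for all $q\in\N$ implies $a\le b$, and $g$ restricted to it is a GSF. Applying Thm.~\ref{thm:BFPT} on $X$, with your constant $k$, gives $x^{*}\in X\subseteq B_{r}(x_{0})$ and $g(x^{*})=x^{*}$, hence $f(x^{*})=0$.
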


\begin{proof}
Let $g:\overline{B_{r}(x_{0})}\ra\rti^{d}$ be defined by \eqref{eq:reductionNewtonToBanach},
so that \eqref{eq:withJacob} becomes $x_{n+1}=g(x_{n})$. We prove
by induction that $x_{n}\in B_{r}(x_{0})$ for all $n\ge1$, the step
for $n=1$ following directly from \eqref{eq:Jacob} and \eqref{eq:10bisBI}.
Assume that $x_{i}\in B_{r}(x_{0})$ is true for all $1\le i\le n$,
we prove it for $n+1$:

\begin{align*}
x_{i+1}-x_{i} & =x_{i}-x_{i-1}-\diff f(x_{i})^{-1}f(x_{i})+\diff f(x_{i-1})^{-1}f(x_{i-1})\\
 & =\diff f(x_{i-1})^{-1}\diff f(x_{i-1})(x_{i}-x_{i-1})-\diff f(x_{i})^{-1}f(x_{i})+\diff f(x_{i-1})^{-1}f(x_{i-1})\\
 & =\diff f(x_{i-1})^{-1}\bigg(\diff f(x_{i-1})(x_{i}-x_{i-1})-f(x_{i})+f(x_{i-1})\bigg)\\
 & \phantom{=}+\bigg(\diff f(x_{i-1})^{-1}-\diff f(x_{i})^{-1}\bigg)f(x_{i}).
\end{align*}
Using \eqref{eq:8BI} and \eqref{eq:9BI} and the inductive assumption
$x_{i}$, $x_{i-1}\in B_{r}(x_{0})$, we obtain
\begin{align}
\left|x_{i+1}-x_{i}\right| & \le\left(M|\diff f(x)^{-1}|+N\right)|x_{i}-x_{i-1}|\nonumber \\
 & \le k|x_{i}-x_{i-1}|,\label{eq:contr}
\end{align}
where we also used \eqref{eq:10BI}. This implies

\[
|x_{i+1}-x_{0}|\le\sum_{j=1}^{i}k^{j}|x_{i}-x_{0}|=\frac{k(1-k^{i})}{1-k}|x_{1}-x_{0}|.
\]
Therefore, from \emph{\eqref{eq:Jacob} }we get\emph{ $|x_{1}-x_{0}|=|\diff f(x_{0})^{-1}||f(x_{0})|\le(1-k)r$
}and hence

\[
|x_{i+1}-x_{0}|\le kr(1-k^{j})<r.
\]
Since $x_{n+1}=g(x_{n})$, inequality \eqref{eq:contr} implies that
$g$ is a contraction on the orbit starting from $x_{0}\in\overline{B_{r}(x_{0})}$,
and the conclusion follows from \eqref{eq:10bisBI} and Thm.~\ref{thm:BFPT}\emph{.}
\end{proof}
From Taylor's theorem \ref{thm:Taylor}, we have
\[
\left|\diff f(v)(u-v)-f(u)+f(v)\right|\le\frac{1}{2}\left|\diff f^{2}(\xi)\right|\left|u-v\right|{}^{2}.
\]
Therefore, assumption \eqref{eq:8BI} surely holds for a small constant
$M$ if $u-v$ is sufficiently close to $0$, i.e.~for a sufficiently
small $r\in\rti_{>0}$ if $u$, $v\in B_{r}(x_{0})$.

We also recall that the inverse function theorem holds for GSF, see
\cite{GiKu16}:
\begin{thm}
\label{thm:localIFTSharp}Let $X\sse\rti^{d}$, let $f\in\gsf(X,\rti^{d})$
and suppose that for some $x_{0}$ in the sharp interior of $X$,
$\diff f(x_{0})$ is invertible in $L(\rti^{d},\rti^{d})$. Then there
exists a sharp neighborhood $U\sse X$ of $x_{0}$ and a sharp neighborhood
$V$ of $f(x_{0})$ such that $f:U\to V$ is invertible and $f^{-1}\in\gsf(V,U)$.
Moreover, from the chain formula we have $\diff(f^{-1})(v)=\left[\diff f(f^{-1}(v))\right]^{-1}$
for all $v\in V$.
\end{thm}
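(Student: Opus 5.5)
The plan is to go back to the defining nets and run the classical inverse function theorem $\eps$-wise, with quantitative radii. Afterwards I check that the local inverses are moderate and that their radii are $\rho$-moderate, so they glue into a GSF.

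\emph{Step 1: a uniform lower bound.} Let $f=[f_{\eps}(-)]$, where $f_{\eps}$ is smooth on $\Omega_{\eps}$ (Thm.~\ref{thm:propGSF}), and let $x_{0}=[x_{0\eps}]$. Since $\diff f(x_{0})$ is invertible in $L(\rti^{d},\rti^{d})$, $\det\diff f(x_{0})=[\det\diff f_{\eps}(x_{0\eps})]$ is invertible. By Lem.~\ref{lem:mayer}, $|\det\diff f_{\eps}(x_{0\eps})|>\rho_{\eps}^{m}$ for some $m$ and $\eps$ small. By Cramer's rule and moderateness of the entries, this gives $|\diff f_{\eps}(x_{0\eps})^{-1}|\le\rho_{\eps}^{-a}$ for some $a\in\N$.

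\emph{Step 2: a moderate bound on the second derivative.} Since $x_{0}$ is in the sharp interior, some ball $B_{\diff\rho^{q}}(x_{0})\subseteq X$. I need $\sup_{|y-x_{0\eps}|\le\rho_{\eps}^{q}}|\diff{}^{2}f_{\eps}(y)|\le\rho_{\eps}^{-b}$ for some $b$. The pointwise moderateness in Def.~\ref{def:netDefMap} is only for sharp points, so I argue by contradiction, as in \cite[Lem.~11]{GKV19}. If the bound failed for every $b$, I would pick $\eps_{k}\downarrow0$ and $y_{\eps_{k}}$ in the ball with $|\diff{}^{2}f_{\eps_{k}}(y_{\eps_{k}})|>\rho_{\eps_{k}}^{-k}$. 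Extending this by $x_{0\eps}$ for all other $\eps$ gives a point of $B_{\diff\rho^{q}}(x_{0})$ where $\diff{}^{2}f$ is not moderate, which is a contradiction. This is the step I expect to be the main obstacle: it is the only place where pointwise (sharp) information must be upgraded to uniform $\eps$-wise estimates, and one has to check that the constructed net really is a point of the domain and that the ball is nonempty $\eps$-wise.

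\emph{Step 3: quantitative classical inverse functions and gluing.} Set $s_{\eps}:=\min(\rho_{\eps}^{q},\tfrac12\rho_{\eps}^{a+b})$. By the mean value theorem, $|\diff f_{\eps}(y)-\diff f_{\eps}(x_{0\eps})|\le\tfrac12|\diff f_{\eps}(x_{0\eps})^{-1}|^{-1}$ on $B_{s_{\eps}}(x_{0\eps})$. The standard contraction proof of the inverse function theorem (Banach's fixed point theorem, applied for each fixed $\eps$) then shows the following. $f_{\eps}$ is injective on $B_{s_{\eps}}(x_{0\eps})$. Its image contains $B_{t_{\eps}}(f_{\eps}(x_{0\eps}))$ with $t_{\eps}=s_{\eps}/(2\rho_{\eps}^{-a})$, a power of $\rho_{\eps}$. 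The smooth local inverse $g_{\eps}$ satisfies $|\diff g_{\eps}|\le2\rho_{\eps}^{-a}$. I then set $V:=B_{[t_{\eps}]}(f(x_{0}))$ and $U:=f^{-1}(V)\cap B_{[s_{\eps}]}(x_{0})$, and define $f^{-1}(v):=[g_{\eps}(v_{\eps})]$.

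\emph{Step 4: checks.} Well-definedness and representative independence follow from the Lipschitz bound on $g_{\eps}$. Moderateness of all derivatives of $g_{\eps}$ follows inductively by differentiating $\diff g_{\eps}=(\diff f_{\eps}\circ g_{\eps})^{-1}$: each $\partial^{\alpha}g_{\eps}$ is a polynomial in derivatives of $f_{\eps}$ at $g_{\eps}(v_{\eps})$ (a point of $U$) and in the moderate entries of $(\diff f_{\eps}\circ g_{\eps})^{-1}$. Thus $f^{-1}\in\gsf(V,U)$, and $f\circ f^{-1}=\mathrm{id}_{V}$, $f^{-1}\circ f=\mathrm{id}_{U}$ hold $\eps$-wise. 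Sharp openness of $U$ follows from continuity of $f$ (Thm.~\ref{thm:propGSF}). Finally, the formula $\diff(f^{-1})(v)=[\diff f(f^{-1}(v))]^{-1}$ follows by applying the chain rule to $f\circ f^{-1}=\mathrm{id}$.
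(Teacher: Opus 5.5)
The paper does not prove this statement; it only recalls it from \cite{GiKu16}, so there is no in-paper proof to compare with. Your $\eps$-wise argument is correct and is the standard route to this result. Its ingredients are: an invertible determinant plus Cramer's rule giving $|\diff f_{\eps}(x_{0\eps})^{-1}|\le\rho_{\eps}^{-a}$; a uniform moderate bound on $\diff{}^{2}f_{\eps}$ over an infinitesimal ball, obtained by the usual contradiction argument; a quantitative classical inverse function theorem whose radii $s_{\eps}$, $t_{\eps}$ are powers of $\rho_{\eps}$; and moderateness of $\partial^{\alpha}g_{\eps}$ via $\diff g_{\eps}=(\diff f_{\eps}\circ g_{\eps})^{-1}$.

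Two details need tightening, both concerning the strict order in $\rti$.

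In Step~2, a net with $|y_{\eps}-x_{0\eps}|\le\rho_{\eps}^{q}$ only gives $|y-x_{0}|\le\diff\rho^{q}$, not $|y-x_{0}|<\diff\rho^{q}$. So $[y_{\eps}]$ need not lie in $B_{\diff\rho^{q}}(x_{0})\subseteq X$. Take the supremum over radius $\rho_{\eps}^{q+1}$ instead, with $s_{\eps}\le\rho_{\eps}^{q+1}$; then $\diff\rho^{q+1}<\diff\rho^{q}$ puts the constructed point in $X$. Also use the globally defined nets of Def.~\ref{def:netDefMap}, so that $f_{\eps}$ is defined on these closed balls.

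In Step~3, $f^{-1}(V)\subseteq U=f^{-1}(V)\cap B_{[s_{\eps}]}(x_{0})$ requires the strict inequality $|f^{-1}(v)-x_{0}|<[s_{\eps}]$ in $\rti$. This does not follow from $g_{\eps}(v_{\eps})\in B_{s_{\eps}}(x_{0\eps})$ for each $\eps$. It does follow from your Lipschitz bound on the convex ball $B_{t_{\eps}}(f_{\eps}(x_{0\eps}))$: there $|f^{-1}(v)-x_{0}|\le2\diff\rho^{-a}|v-f(x_{0})|<2\diff\rho^{-a}[t_{\eps}]=[s_{\eps}]$. The same strictness also guarantees $f^{-1}(v)\in X$, which you need before writing $f(f^{-1}(v))=v$.
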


\noindent Applying Taylor's formula to $f^{-1}$, we can easily see
that $|(\diff f(v)^{-1}-\diff f(u)^{-1})f(u)|\le H\left|f(u)\right|\left|f(v)-f(u)\right|$.
The term $\left|f(v)-f(u)\right|$ is of the order $|v-u|$, and hence
condition \eqref{eq:9BI} holds for a small constant $N$ if $|f(u)|$
is sufficiently small, and this formalizes the intuitive idea that
``the initial iteration $x_{0}$ must be sufficiently close to the
root $x^{*}\in B_{r}(x_{0})$, so that $\left|f(u)\right|$ is sufficiently
small''.

\subsection{Quadratic convergence of Newton method}
\begin{defn}
We say that the sequence $(x_{n})_{n\in\mathbb{N}}\in\rcrho$ \emph{converges
at least of order} $q$ to $x^{*}$ if $\exists\alpha\in\rcrho_{>0}\,\exists N\in\N\,\forall n\in\mathbb{N}_{\ge N}:\ |x_{n+1}-x^{*}|\le\alpha|x_{n}-x^{*}|^{q}$.
If $q=2$, then we say that the sequence converges at least quadratically.
\end{defn}

\begin{thm}
Let $U\subseteq\rti^{d}$ and $f\in\gsf(U,\rti^{d})$ be such that
at $x^{*}\in U$ the differential $\diff f(x^{*})$ is invertible.
Assume that $(x_{n})_{n\in\N}$ is a Newton process for $f$ which
converges to $x^{*}$ as $n\to\infty$. Then for all $M>\frac{1}{2}\left|\diff f(x^{*})^{-1}\diff f(x^{*})\right|$
and for $n\mathbb{\in N}$ sufficiently large, we have $|x_{n+1}-x^{*}|\le M|x_{n}-x^{*}|^{2}$,
i.e.~$(x_{n})_{n\in\N}$ converges quadratically to $x^{*}$.
\end{thm}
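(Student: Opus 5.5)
The idea is to use Taylor's formula with integral remainder (Thm.~\ref{thm:Taylor}.\ref{enu:integralRem}) to get an exact expression for the Newton error, and then to control its coefficient by sharp continuity at $x^{*}$. I read the statement with the evident correction $M>\frac{1}{2}\left|\diff f(x^{*})^{-1}\diff{^{2}f}(x^{*})\right|$. I will also assume that $x^{*}$ lies in the sharp interior of $U$, as needed for Taylor's formula and the Fermat-Reyes theorem. Then there is $R\in\rti_{>0}$ with $B_{R}(x^{*})\subseteq U$. Since $x_{n}\to x^{*}$ sharply, $x_{n}\in B_{R}(x^{*})$ for $n$ large. The ball is convex, being defined by the generalized norm, which satisfies the triangle inequality. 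Hence the segment $[x_{n},x^{*}]$ lies in $U$ and Taylor's formula applies.

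\textbf{Step 1 (error identity).} Set $h_{n}:=x^{*}-x_{n}$. By Thm.~\ref{thm:Taylor}.\ref{enu:integralRem} with $n=1$, applied componentwise,
\[
0=f(x^{*})=f(x_{n})+\diff f(x_{n})h_{n}+\int_{0}^{1}(1-t)\,\diff{^{2}f}(x_{n}+th_{n})\cdot h_{n}^{2}\,\diff t .
\]
Here $f(x^{*})=0$ because $x^{*}$ is a fixed point of $g$ in \eqref{eq:reductionNewtonToBanach}, by sharp continuity of $f$ and $\diff f$. Now substitute this into $x_{n+1}-x^{*}=-h_{n}-\diff f(x_{n})^{-1}f(x_{n})$. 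This gives
\[
x_{n+1}-x^{*}=\diff f(x_{n})^{-1}\int_{0}^{1}(1-t)\,\diff{^{2}f}(x_{n}+th_{n})\cdot h_{n}^{2}\,\diff t .
\]
Working $\eps$-wise with operator norms (submultiplicativity holds for each $\eps$, hence in $\rti$), we obtain $|x_{n+1}-x^{*}|\le c_{n}|h_{n}|^{2}$, where
\[
c_{n}:=\Big|\diff f(x_{n})^{-1}\int_{0}^{1}(1-t)\,\diff{^{2}f}(x_{n}+th_{n})\,\diff t\Big| .
\]

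\textbf{Step 2 ($c_{n}\to c:=\frac12|\diff f(x^{*})^{-1}\diff{^{2}f}(x^{*})|$ sharply).} By the previous lemma and its proof (in $d$ dimensions, applied to $\det\diff f$, which is a GSF), $\diff f(x)$ is invertible near $x^{*}$. Then $x\mapsto\diff f(x)^{-1}$ is sharply continuous at $x^{*}$ by Cramer's rule: the adjugate is a GSF, and so is $1/\det\diff f$ on the region where the determinant is invertible. Since $\diff{^{2}f}$ is a GSF, it is sharply continuous by Thm.~\ref{thm:propGSF}.\ref{enu:GSF-cont}. So for every $q$ there is $Q$ such that $|\diff{^{2}f}(y)-\diff{^{2}f}(x^{*})|<\diff\rho^{q}$ for all $y\in B_{\diff\rho^{Q}}(x^{*})$. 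All points $x_{n}+th_{n}$, $t\in[0,1]$, lie within distance $|h_{n}|$ of $x^{*}$. Hence the integral differs from $\frac12\diff{^{2}f}(x^{*})$ by at most $\frac12\diff\rho^{q}$ once $|h_{n}|<\diff\rho^{Q}$. This estimate is done $\eps$-wise on representatives, which gives a bound uniform in $t$. Combining with the continuity of the inverse, and using $\big||A|-|B|\big|\le|A-B|$, yields $c_{n}\to c$.

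\textbf{Step 3 (conclusion).} The hypothesis $M>c$ means that $M-c\in\rti_{>0}$, so it is an admissible sharp radius. Thus for $n$ large, $|c_{n}-c|<M-c$, whence $c_{n}\le M$ and $|x_{n+1}-x^{*}|\le M|x_{n}-x^{*}|^{2}$.

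The main obstacle is Step 2. The order on $\rti$ is not total, so one cannot argue with a ``$\limsup$''; the argument must instead pass through the invertibility of $M-c$. One also needs sharp continuity of the matrix inverse and a bound on the integral remainder that is uniform in $t$. I would handle the latter by working on representatives and choosing $\eps_{0}$ independently of $t$, which is possible because the moderateness and negligibility estimates hold uniformly on the segment.
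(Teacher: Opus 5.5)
Your proof is correct and has the same skeleton as the paper's: expand at $x_n$, evaluate at $x^*$, use $f(x^*)=0$, multiply by $\diff f(x_n)^{-1}$ to get an exact error identity, and pass to the limit by sharp continuity. You differ in the choice of remainder.

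The paper uses the Lagrange form Thm.~\ref{thm:Taylor}.\ref{enu:LagrangeRem} with a single point $\mu_n\in[x_n,x^*]$. It gets $x_{n+1}-x^*=\frac12\diff f(x_n)^{-1}\diff{^{2}f}(\mu_n)\cdot h_n^2$, with the same $\diff f$/$\diff{^{2}f}$ typo you fixed. That theorem is stated for scalar $f$, and a single mean-value point does not exist in general for $\rti^{d}$-valued maps. So the paper's argument is really only justified for $d=1$, and your componentwise integral remainder is what makes the proof work for every $d$.

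The paper's route is shorter and needs no uniformity in $t$. Yours costs a uniform-in-$t$ estimate of $\int_0^1(1-t)\bigl(\diff{^{2}f}(x_n+th_n)-\diff{^{2}f}(x^*)\bigr)\diff t$. You also make explicit, and handle correctly, several points the paper passes over: the interior assumption, convexity of balls so that $[x_n,x^*]\subseteq U$, continuity of the inverse, and using $M-c\in\rti_{>0}$ as a radius instead of a $\limsup$.

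One justification should be sharpened. Uniformity in $t$ does not come from ``moderateness estimates holding uniformly''. It comes from the fact that $|\diff{^{2}f}(x_n+th_n)-\diff{^{2}f}(x^*)|<\diff\rho^{q}$ holds at \emph{every generalized} $t=[t_\eps]\in[0,1]$. Suppose the $\eps$-wise bound by $2\rho_\eps^{q}$ were not uniform on $[0,1]_{\R}$ for $\eps$ small. Then you could choose $\eps_k\to0$ and violating points $t_k\in[0,1]_{\R}$, and the generalized point with $t_{\eps_k}:=t_k$ would contradict the pointwise inequality.

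Alternatively, you can avoid integrals entirely by applying the Lagrange remainder to each component $f^{j}$, with its own point $\xi_{n,j}\in[x_n,x^*]$. This gives $x_{n+1}-x^*=\frac12\diff f(x_n)^{-1}B_n(h_n,h_n)$ with $B_n:=\bigl(\diff{^{2}f^{j}}(\xi_{n,j})\bigr)_{j=1}^{d}$, and $B_n\to\diff{^{2}f}(x^*)$ by sharp continuity.
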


\begin{proof}
Set $x_{n}-x^{*}=:h_{n}$. Using Taylor's theorem \ref{thm:Taylor},
for some $\mu_{n}\in[x_{n},x^{*}]$ we have

\[
f(x_{n}-h_{n})=f(x_{n})-h_{n}\cdot\diff f(x_{n})+\frac{h_{n}^{2}}{2}\diff f(\mu_{n}).
\]

\noindent Now, since $x_{n}-h_{n}=x^{*}$ and the Newton process $(x_{n})_{n\in\N}$
converges to $x^{*}$, we necessarily have $f(x^{*})=0$.

\[
f(x_{n})-(x_{n}-x^{*})\diff f(x_{n})+\frac{h_{n}^{2}}{2}\diff f(\mu_{n})=0.
\]

\noindent By Def.~\ref{def:NRP}, $\diff f(x_{n})$ is invertible,
hence

\[
\diff f(x_{n})^{-1}f(x_{n})-(x_{n}-x^{*})+\frac{h_{n}^{2}}{2}\diff f(x_{n})^{-1}\diff f(\mu_{n})=0
\]

\noindent From Def.~\ref{def:NRP} of Newton's process, we have

\[
x_{n+1}-x^{*}=\frac{h_{n}^{2}}{2}\diff f(x_{n})^{-1}\diff f(\mu_{n})
\]

\noindent which implies

\[
|x_{n+1}-x^{*}|=\frac{1}{2}\left|\diff f(x_{n})^{-1}\diff f(\mu_{n})\right||x_{n}-x^{*}|^{2}
\]

\noindent By sharp continuity Thm.~\ref{thm:propGSF} and the Fermat-Reyes
Thm.~\ref{thm:FR-forGSF}, $\diff f(x_{n})$ converges to $\diff f(x^{*})$
and $\mu_{n}\in[x_{n},x^{*}]$ converges to $x^{*}$, and for sufficiently
large $n$, $M>\frac{1}{2}\left|\diff f(x_{n})^{-1}\diff f(\mu_{n})\right|\to\frac{1}{2}\left|\diff f(x^{*})^{-1}\diff f(x^{*})\right|$
and this proves the claim.
\end{proof}

\section{\label{sec:Examples}Examples}

In a non-Archimedean setting such as $\rti$, the calculations needed
to show that assumptions of Thm.~\ref{thm:NBI} hold in a given example
become immediately very involved. We therefore sometimes used the
algebraic symbolic algorithms of Wolfram Mathematica ver.~14.3 to
arrive at the searched solutions.

A simple, but ordinary smooth case without singularities, that can
be verified without using a computer is the following:
\begin{example}
Set $f(u):=1-u^{2}$. We want to show that assumptions \eqref{eq:8BI}-\eqref{eq:Jacob}
of Thm.~\ref{thm:NBI} are satisfied. We have $\diff f(u)=-2u$ and
hence $(\diff f(u))^{-1}=-\frac{1}{2u}$. We have to find suitable
constants $M$, $N\in\rti_{>0}$ and using those, a related constant
$k\in\rti_{>0}$ such that the properties hold for our choice of initial
point $x_{0}$ and radius $r\in\rti_{>0}$. This leads to the following
computations:\medskip{}

\noindent\textbf{Assumption~\eqref{eq:8BI}:} 
\begin{align*}
|\diff f(v)(u-v)-f(u)+f(v)| & =|-2v(u-v)-(1-u^{2})+1-v^{2}|\\
 & =|-2uv+2v^{2}+u^{2}-v^{2}|\\
 & =|u^{2}-2uv+v^{2}|=|(u-v)^{2}|\leq M|u-v|.
\end{align*}
This holds if $|u-v|\leq M$ and hence $M:=2r$ is a suitable choice
to satisfy assumption \eqref{eq:8BI}.\medskip{}

\noindent\textbf{Assumption~\eqref{eq:9BI}:}
\begin{align*}
|(\diff f(v)^{-1}-\diff f(u)^{-1})f(u)| & =\left|(-\frac{1}{2v}+\frac{1}{2u})(1-u^{2})\right|=\left|\frac{-u+v}{2uv}\right|\left|1-u^{2}\right|\\
 & =|u-v|\left|\frac{1}{2uv}\right|\left|1-u^{2}\right|\leq N|u-v|.
\end{align*}
Therefore, we necessarily must have $N\geq\frac{\left|1-u^{2}\right|}{|2uv|}$.
Knowing that $u$, $v\geq x_{0}+r\geq0$, we can set $N:=\frac{1-(x_{0}+r)^{2}}{2(x_{0}-r)^{2}}$
so that \eqref{eq:9BI} holds as well.\medskip{}

\noindent\textbf{Assumption~\eqref{eq:10BI}:} We have $M|\diff f(x)^{-1}|+N=2r\frac{1}{2(x_{0}-r)^{2}}+\frac{1-(x_{0}+r)^{2}}{2(x_{0}-r)^{2}}\leq k,$
so we can take $k$ equal to the left hand side of the inequality
to ensure \eqref{eq:10BI}.\medskip{}

\noindent\textbf{Assumption~\eqref{eq:10bisBI}:} Now, we have to
choose $x_{0}\approx1$ and $r\approx0$. Set $x_{0}:=1-\diff\rho^{2}$
and $r:=\diff\rho$. To verify that $\lim_{n\to+\infty}k^{n}=0$,
we have to show that there exists $R\in\R_{>0}$ such that $k\leq\diff\rho^{R}.$
Plugging our chosen values into the form of $k,$ we find
\[
k=\frac{2\diff\rho+1-((1-\diff\rho^{2})+\diff\rho)^{2}}{2((1-\diff\rho^{2})-\diff\rho)^{2}}=\frac{\diff\rho^{2}+2\diff\rho^{3}-\diff\rho^{4}}{2(1-\diff\rho-\diff\rho^{2})^{2}}\leq\diff\rho^{R},
\]
which holds e.g.~for $R:=1$.\medskip{}

\noindent\textbf{Assumption~\eqref{eq:Jacob}:
\begin{align*}
|\diff f(x_{0})^{-1}||f(x_{0})| & =\frac{|1-(1-\diff\rho^{2})^{2}|}{2|1-\diff\rho^{2}|}=\frac{1-(1-2\diff\rho^{2}+\diff\rho^{4})}{2(1-\diff\rho^{2})}=\frac{2\diff\rho^{2}-\diff\rho^{4}}{2(1-\diff\rho^{2})}\\
 & \leq(1-k)\diff\rho=\diff\rho-\diff\rho\frac{\diff\rho^{2}+2\diff\rho^{3}-\diff\rho^{4}}{2(1-\diff\rho-\diff\rho^{2})^{2}}.
\end{align*}
}This inequality holds by comparing the order of the infinitesimals
numbers appearing on the two sides.
\end{example}

The following is a non-Archimedean version of the previous one.
\begin{example}
Let $f(u):=Ha^{2}-Hu^{2}$ where $a$, $H\in\rcrho_{>0}$, $a$ is
an infinitesimal number, but $Ha^{2}\in\rcrho$ is infinite. First,
we notice that $\diff f(u)=-2Hu$ and $(\diff f(u))^{-1}=-\frac{1}{2Hu}$.\medskip{}

\noindent\textbf{Assumption~\eqref{eq:8BI}:
\begin{align*}
|\diff f(v)(u-v)-f(u)+f(v)| & =\left|(-2Hv)(u-v)-Ha^{2}+Hu^{2}+Ha^{2}-Hv^{2}\right|\\
 & =\left|-2Hvu+2Hv^{2}+Hu^{2}-Hv^{2}\right|\\
 & =\left|H(u-v)^{2}\right|\leq M|u-v|.
\end{align*}
}This implies that $|H||u-v|\leq M$ and hence $M:=2r|H|$ is a sufficient
choice to satisfy this inequality.\medskip{}

\noindent\textbf{Assumption~\eqref{eq:9BI}:
\begin{align*}
|(\diff f(v)^{-1}-\diff f(u)^{-1})f(u)| & =\left|(\frac{-1}{2Hv}+\frac{1}{2Hu})(Ha^{2}-Hu^{2})\right|\\
 & =\left|-\frac{Ha^{2}}{2Hv}+\frac{Hu^{2}}{2Hv}+\frac{Ha^{2}}{2Hu}-\frac{Hu^{2}}{2Hu}\right|\\
 & =\left|\frac{u^{2}-a^{2}}{2v}+\frac{a^{2}-u^{2}}{2u}\right|\\
 & =\left|\frac{u^{3}-a^{2}u+a^{2}v-vu^{2}}{2uv}\right|\\
 & =\left|\frac{(u-v)(u^{2}-a^{2})}{2uv}\right|\leq N|u-v|.
\end{align*}
}So we must necessarily have $\left|\frac{u^{2}-a^{2}}{2uv}\right|\leq N$
and we can choose $N:=\frac{a^{2}-(x_{0}+r)^{2}}{2(x_{0}-r)^{2}}$.\medskip{}

\noindent\textbf{Assumption~\eqref{eq:10BI}:} $M|\diff f(x)^{-1}|+N=2r|H|\left|\frac{1}{2Hu}\right|+N=\frac{2r}{2|u|}+\frac{a^{2}-(x_{0}+r)^{2}}{2(x_{0}-r)^{2}}\leq\frac{2r+a^{2}-(x_{0}+r)^{2}}{2(x_{0}-r)^{2}}=:k$.\medskip{}

\noindent\textbf{Assumption~\eqref{eq:10bisBI}:} To find the constant
$k$, we can choose $x_{0}:=a+a^{2}\diff\rho^{R+2}$ and $r:=a^{2}\diff\rho^{R+1}$
for some $R\in\mathbb{R}_{>0}.$ These choices lead us to the following
$k$
\[
k=\frac{\diff\rho^{R+1}\left(2-2a-2a\diff\rho-a^{2}\diff\rho^{R+1}-2a^{2}\diff\rho^{R+1}-a^{2}\diff\rho^{R+3}\right)}{2(1+a\diff\rho^{R+2}-a\diff\rho^{R+1})^{2}}.
\]
Now, we observe that $k\leq\diff\rho^{R}$ holds for our choice of
$x_{0}$ and $r$, and for any positive $R$ and this proves the fourth
inequality.\medskip{}

\noindent\textbf{Assumption~\eqref{eq:Jacob}:
\begin{align*}
|\diff f(x_{0})^{-1}||f(x_{0})| & =\left|\frac{1}{2Hx_{0}}\right|\left|Ha^{2}-Hx_{0}^{2}\right|=\frac{|a^{2}-x_{0}^{2}|}{2|x_{0}|}\\
 & =\left|\frac{a^{2}-\left(a^{2}+2a^{3}\diff\rho^{R+2}+a^{4}\diff\rho^{2R+4}\right)}{2\left(a+a^{2}\diff\rho^{R+2}\right)}\right|\\
 & =\frac{2a^{3}\diff\rho^{R+2}+a^{4}\diff\rho^{2R+4}}{2\left(a+a^{2}\diff\rho^{R+2}\right)}=\frac{a^{2}\diff\rho^{R+2}\left(2+a\diff\rho^{R+2}\right)}{2\left(1+a\diff\rho^{R+2}\right)}\\
 & \leq(1-k)a^{2}\diff\rho^{R+1}.
\end{align*}
}We have that $\frac{a^{2}\diff\rho^{R+2}\left(2+a\diff\rho^{R+2}\right)}{2\left(1+a\diff\rho^{R+2}\right)}=\diff\rho^{R+1}a^{2}\frac{\diff\rho\left(2+a\diff\rho^{R+2}\right)}{2\left(1+a\diff\rho^{R+1}\right)}$
and hence $\frac{\diff\rho\left(2+a\diff\rho^{R+2}\right)}{2\left(1+a\diff\rho^{R+1}\right)}\leq1-k$
which holds since $\diff\rho$ is an infinitesimal number, hence also
proving the last inequality.
\end{example}

For the last example, we consider a GSF defined by regularizing the
ramp function $\text{ramp}(x):=\max(0,x)$. In order to simplify the
calculations, we consider the gauge $\rho_{\eps}=\eps$ and the mollification
with $\mu(x):=\max(0,1-x^{2})\frac{3}{4}$, $\mu_{\eps}(x):=\frac{1}{\eps}\mu(\frac{x}{\eps})$
(which is clearly not a Colombeau mollifier). For some calculations
related to this example, we used Wolfram Mathematica ver.~14.3.
\begin{example}
Taking the convolution of the aforementioned ramp function with $\mu_{\eps}$
we obtain $f(x):=\left[\left(\text{ramp}*\mu_{\eps}\right)(x)\right]=\frac{3}{4\diff\rho}\frac{(3\diff\rho-x)(\diff\rho+x)^{3}}{12\diff\rho^{2}}\in\rti$
for all $x\in\rti$, and hence $\diff f(x)=\left[\left(\text{ramp}*\diff\mu_{\eps}\right)(x)\right]=\frac{(2\diff\rho-x)(\diff\rho+x)^{2}}{4\diff\rho^{3}}$,
$\diff f(x)^{-1}=\frac{4\diff\rho^{3}}{(2\diff\rho-x)(\diff\rho+x)^{2}}$.
With these expressions we obtain the following results for the assumptions
of Thm.~\ref{thm:NBI}:

\medskip{}

\noindent\textbf{Assumption~\eqref{eq:8BI}:} The inequality $|\diff f(v)(u-v)-f(u)+f(v)|\leq M|u-v|$
leads to
\begin{align*}
M & \geq\frac{1}{192\diff\rho^{3}}\left(\left|16\diff\rho u^{3}+16\diff\rho u^{2}v+16\diff\rho uv^{2}+(16\diff\rho-3)v^{3}\right.\right.\\
 & \phantom{\ge}\left.\left.-96\diff\rho^{3}u+(9\diff\rho^{2}-96\diff\rho^{3})v-(128\diff\rho^{4}-6\diff\rho^{3})\right|\right).
\end{align*}
Now, to choose an $M$ that satisfies this inequality, we use $u$,
$v\leq x_{0}+r$, and plugging this into the last inequality we find
that the following
\[
M:=\frac{|64\diff\rho-3|}{192\diff\rho^{3}}\left|\left(x_{0}+r+\diff\rho\right)^{2}(x_{0}+r-2\diff\rho)\right|
\]
satisfy the claim. We recall that in this example, $\diff\rho=[\eps]\in\rti=\Rtil$
is the usual Colombeau ring of generalized numbers.\medskip{}

\noindent\textbf{Assumption~\eqref{eq:9BI}:} Applying the same
procedure to $|(\diff f(v)^{-1}-\diff f(u)^{-1})f(u)|\leq N|u-v|$,
we find
\[
N:=\frac{16\diff\rho(3\diff\rho-x_{0}-r)(\diff\rho-x_{0}-r)(\diff\rho+x_{0}+r)^{2}}{(2\diff\rho-x_{0}+r)^{2}(\diff\rho+x_{0}-r)^{2}}.
\]
\medskip{}

\noindent\textbf{Assumption~\eqref{eq:10BI}:} For $k$, the calculations
yield
\begin{align*}
M|\diff f(x)^{-1}|+N & =\frac{|62\diff\rho-3|}{3}\frac{|(x_{0}+r)+\diff\rho)^{2}(x_{0}+r-2\diff\rho)|}{|(2\diff\rho-x_{0}+r)(x_{0}-r+\diff\rho)^{2}|}\\
 & \phantom{=}\frac{16\diff\rho(3\diff\rho-x_{0}-r)(\diff\rho-x_{0}-r)(\diff\rho+x_{0}+r)^{2}}{(2\diff\rho-x_{0}+r)^{2}(\diff\rho+x_{0}-r)^{2}}\\
 & =\frac{(x_{0}+r+\diff\rho)^{2}}{3(2\diff\rho-x_{0}+r)^{2}(x_{0}-r+\diff\rho)^{2}}\left(|64\diff\rho-3|\left|(x_{0}+r-2\diff\rho)\right.\right.\\
 & \phantom{=}\left.(2\diff\rho-x_{0}+r)\right|\left.+48\diff\rho|(3\diff\rho-x_{0}-r)(\diff\rho-x_{0}-r)|\right)=:k.
\end{align*}
\medskip{}

\noindent\textbf{Assumption~\eqref{eq:10bisBI}:} For this example,
a suitable choice of the initial point and the radius would be $x_{0}:=-\diff\rho+\diff\rho^{2}$
and $r=\diff\rho+\sqrt{\diff\rho}.$ We again used Mathematica to
confirm that $k\leq\diff\rho^{R}$ for these choices, in fact, a suitable
$R$ would be $R=10^{-8}$.\medskip{}

\noindent\textbf{Assumption~\eqref{eq:Jacob}:} The test of $|\diff f(x_{0})^{-1}||f(x_{0})|\leq(1-k)r$
was also checked with Mathematica and the choices of the initial point
and the radius also satisfy this inequality.

\noindent As a further verification in this example, using Mathematica
all the inequalities \eqref{eq:8BI}-\eqref{eq:Jacob} have also been
tested to hold for $\eps$ sufficiently small and the previous choices
of the constants $M=[M_{\eps}]$, $N=[N_{\eps}]$, $k=[k_{\eps}]$,
$x_{0}=[x_{0\eps}]$ and $r=[r_{\eps}]$.
\end{example}

\section{Conclusions}

The present article is only one of a series \cite{NuGi1,BIG,GGBL,KeGi24b,LeLuBaGi17,GiKu16,GK15,GKV15,GKV19,MTG}
aiming to show that GSF, in spite of fact that they embed ordinary
Schwartz distributions, share with ordinary smooth functions several
non trivial properties. As proved in \cite{BIG,GGBL,LeLuBaGi17,MTG},
this allow the creation of rigorous models for real-world applications
showing singularities. The calculations presented in these models
faithfully correspond to the informal intuitive formal manipulations
frequently performed by applied researchers. The final examples presented
in this paper open the possibility to use computer aided symbolic
calculations to help in the numerical calculus of these generalized
functions.

\end{document}